%% file: main.tex
\documentclass{IEEEtran}
\usepackage{cite}
\usepackage{amsmath,amssymb,amsfonts}
\usepackage{amsthm}
\usepackage{graphicx}
\usepackage{textcomp}
\usepackage{tikz}
\usetikzlibrary{positioning,arrows.meta,calc}
\usepackage{mathtools}
\usepackage{algorithm}
\usepackage{algpseudocode}
\usepackage{booktabs}
\usepackage{xcolor}

\def\BibTeX{{\rm B\kern-.05em{\sc i\kern-.025em b}\kern-.08em
    T\kern-.1667em\lower.7ex\hbox{E}\kern-.125emX}}
\begin{document}
\title{Tube MPC for Bilinear Koopman Models using Robust Control Contraction Metrics}
\author{Thomas O. de Jong, \IEEEmembership{Graduate Student, IEEE},
        Mircea Lazar, \IEEEmembership{Senior Member, IEEE}
\thanks{Thomas O. de Jong is with the Department of Electrical Engineering, Eindhoven University of Technology, 5600 MB Eindhoven, The Netherlands (e-mail: t.o.d.jong@tue.nl).}
\thanks{Mircea Lazar is with the Department of Electrical Engineering, Eindhoven University of Technology, 5600 MB Eindhoven, The Netherlands (e-mail: m.lazar@tue.nl).}}

\maketitle

\newtheorem{theorem}{Theorem}
\newtheorem{problem}{Problem}
\newtheorem{lemma}{Lemma}
\newtheorem{proposition}{Proposition}
\newtheorem{corollary}{Corollary}
\newtheorem{definition}{Definition}
\newtheorem{remark}{Remark}
\newtheorem{assumption}{Assumption}
\newtheorem{example}{Example}

\newcommand{\col}{\operatorname{col}}
\newcommand{\diag}{\operatorname{diag}}

\begin{abstract}
This paper presents a robust tube model predictive control (MPC) framework for nonlinear systems represented by bilinear Koopman models identified from data. We derive discrete-time robust control contraction metrics (RCCMs) for bilinear Koopman models, which certify a contraction property that explicitly accounts for the mismatch between the Koopman model and the true dynamics. To synthesize such certificates for the typically high-dimensional lifted state, we propose a scalable learning approach that trains neural network parameterizations of the metric and of an associated feedback controller over a sparsely sampled subset of the lifted state space. The resulting certificates are used to construct a robust tube MPC scheme with tightened constraints, for which we establish recursive feasibility, robust constraint satisfaction and input-to-state stability (ISS) of the closed-loop system with respect to the mismatch between the Koopman model and the true dynamics. By exploiting the linear parameter-varying structure of bilinear Koopman models, the MPC problem becomes convex for a fixed scheduling sequence, while the geodesic computation underlying the contractive feedback is reformulated, via a Chebyshev pseudospectral discretization, as a sequence of quadratic programs. The complete framework is validated on a nonlinear pendulum benchmark with a state-dependent input gain.
\end{abstract}

\begin{IEEEkeywords}
Data-driven predictive control, nonlinear
systems, contraction theory, Koopman operators, recursive feasibility, stability.
\end{IEEEkeywords}

\section{Introduction}
\label{sec:introduction}
Model predictive control (MPC) is one of the few control methodologies that systematically handles hard state and input constraints, with a mature stability theory in both nominal and robust settings \cite{MAYNE2000789,mayne2005robust}. Its implementation, however, hinges on the availability of a sufficiently accurate prediction model. In many applications, first-principles modeling is costly or impractical, which motivates learning prediction models directly from data. Koopman operator theory \cite{koopman1931hamiltonian} offers an attractive route: the state is lifted through a set of observable functions into a higher-dimensional space in which the dynamics evolve (approximately) linearly, so that tools from linear systems theory become applicable to nonlinear systems. For control-affine nonlinear systems, bilinear Koopman realizations are particularly appealing, since an exact linear lifted realization with inputs generally does not exist, whereas bilinear realizations can approximate control-affine dynamics with arbitrary accuracy \cite{bruder2021advantages}, with certified approximation error bounds recently derived in \cite{strasser2024safedmd}.

In practice, finite data and a finite-dimensional lifting map render modeling errors unavoidable, and a Koopman model used for prediction and control should therefore be regarded as an uncertain model, which calls for robust MPC formulations. For linear Koopman realizations, robust designs are available, including tube-based MPC \cite{zhang2022robust} and Koopman data-driven predictive control with robust stability and recursive feasibility guarantees \cite{de2024koopman}. For bilinear realizations, in contrast, robust MPC schemes that jointly provide recursive feasibility, robust constraint satisfaction and closed-loop guarantees for the true nonlinear dynamics remain scarce, in part because the high dimension of the lifted state renders classical robust invariant set constructions intractable. In parallel, a recent line of work has established closed-loop guarantees for Koopman-based MPC via certified error bounds on the learned surrogate: practical asymptotic stability under uniform bounds \cite{worthmann2024terminal,bold2025datadriven}, strengthened to asymptotic stability of the origin under \emph{proportional} error bounds that vanish at the equilibrium \cite{schimperna2025asymptotic,schimperna2026terminal}, with certified proportional and kernel-based bounds available for bilinear surrogates \cite{strasser2024safedmd,strasser2025kernel}; see \cite{strasser2026overview} for a comprehensive overview.

Contraction theory \cite{LOHMILLER1998683} studies stability of nonlinear systems in terms of the convergence of neighboring trajectories, and control contraction metrics (CCMs) \cite{manchester2017control} turn this differential viewpoint into a convex synthesis framework for stabilizing feedback design. Robust CCMs \cite{manchester2018robust} additionally provide an explicit disturbance gain, which makes them natural certificates for tube-based robust MPC: the Riemannian distance induced by the metric bounds the deviation between the true closed-loop trajectory and a nominal MPC trajectory, see, e.g., \cite{ZhaoGridding,sasfi2023robust,YangCTubeDT2024}. Existing contraction-based tube MPC schemes, however, predominantly address continuous-time dynamics and rely on sum-of-squares (SOS) programming or gridding-based LMI relaxations for metric synthesis \cite{sasfi2023robust,ZhaoGridding}, while discrete-time formulations \cite{wei2021control,YangCTubeDT2024} have been developed for low-dimensional states. Neither ingredient carries over directly to Koopman models: the lifted state is high-dimensional by construction, which rules out SOS synthesis, and the discrete-time setting couples the metric at successive time instants, which complicates both synthesis and online geodesic computation.

Motivated by the above, this paper develops a discrete-time RCCM-based tube MPC framework tailored to data-driven bilinear Koopman models, formulated directly in the lifted state space. The bilinear structure is exploited twice. First, it renders the robust contraction condition a linear matrix inequality at every fixed lifted state and input, which enables learning neural network parameterizations of the metric and of an associated feedback gain over a sparsely sampled subset of the lifted space, in the spirit of neural contraction metrics \cite{li2025neural}. Second, it admits an exact linear parameter-varying (LPV) embedding of the nominal prediction model, which renders the tube MPC problem convex for a fixed scheduling sequence, while the geodesic computation underlying the contractive feedback is discretized via Chebyshev pseudospectral methods \cite{gong2009chebyshev,leung2017nonlinear} and reformulated as a sequence of quadratic programs. Notably, the resulting closed-loop guarantees take the form of input-to-state stability (ISS) \cite{JIANG2001857} with respect to the Koopman model mismatch, which is stronger than the input-to-state practical stability (ISpS) and set-convergence guarantees typically established for contraction-based and Koopman-based tube MPC \cite{mayne2005robust,sasfi2023robust,YangCTubeDT2024,zhang2022robust}. Our main contributions are as follows:

\begin{itemize}
\item[\textbf{(C1)}] We develop a robust control contraction metric (RCCM)-based tube MPC framework for discrete-time bilinear Koopman models with recursive feasibility, stability and constraint satisfaction of the true dynamics.

\item[\textbf{(C2)}] We combine discrete-time RCCMs with pseudospectral geodesic approximations to reformulate the tube MPC as a sequence of convex problems and geodesic computation as sequential quadratic programs (SQPs).

\item[\textbf{(C3)}] We propose a sparse sampling strategy for learning neural contraction metrics and controllers for bilinear Koopman models.

\item[\textbf{(C4)}] We design terminal ingredients using LPV-based techniques and model Koopman prediction errors as bounded disturbances. Under this assumption, we establish recursive feasibility and closed-loop ISS guarantees for the original nonlinear system with respect to the Koopman model mismatch.
\end{itemize}

\emph{Concurrent work:} While finalizing this manuscript, the concurrent work \cite{higuchi2026bilinear} appeared, which also combines data-driven bilinear Koopman models with contraction-based tube MPC. Therein, predicted lifted states are reprojected onto the original state space at every prediction step, and a nominal contraction metric with an additive error bound is synthesized in the original, low-dimensional state space, e.g., via SOS programming for polynomial observables, while the tube anchoring constraint embeds a Riemannian distance into the resulting nonlinear MPC problem. In contrast, our approach operates directly in the lifted space and employs a robust contraction inequality with an explicit disturbance gain. This makes the complexity of the learned contraction certificates independent of the specific complexity of the Koopman lifting functions, although it scales with the dimension of the lifted state. In comparison, the alternative approach avoids a direct dependence on the number of basis functions, but its complexity instead depends on the complexity of the individual lifting functions. We design terminal sets via LPV techniques rather than a terminal point constraint, and we exploit the LPV structure of the bilinear model to split the online computations into a convex MPC problem and a sequence of quadratic programs for the geodesic. The two approaches are therefore complementary in terms of certificate structure, synthesis scalability and online computational architecture. Moreover, we establish ISS of the closed-loop system with respect to the model mismatch, which is stronger than the convergence to a neighborhood of the target certified in \cite{higuchi2026bilinear}.

The remainder of the paper is organized as follows. Section~\ref{sec:preliminaries} reviews discrete-time robust control contraction theory. Section~\ref{sec:koopman} presents the considered bilinear Koopman model class and the associated learning problem. Section~\ref{sec:rccm_koopman} derives the robust contraction inequality for bilinear Koopman models and the proposed sparse-sampling neural certificate learning approach. Section~\ref{sec:mpc} presents the robust MPC scheme together with recursive feasibility, constraint satisfaction and closed-loop input-to-state stability guarantees. Section~\ref{sec:computation} develops the computational framework and Section~\ref{sec:example} provides a numerical validation, while conclusions are summarized in Section~\ref{sec:conclusion}.

\subsection*{Notations and basic definitions}
Let $\mathbb{R}$ and $\mathbb{N}$ denote the field of real and natural numbers. For every $c\in\mathbb{R}$ and $\Pi\subseteq\mathbb{R}$ define $\Pi_{\geq c}:= \{ k\in\Pi | k\geq c\}$. 
Throughout this paper, for any finite number $q\in\mathbb{N}_{\geq1}$ of column vectors or  functions$\{\xi_1,\dots,\xi_q\}$ we will make use of the operator $\text{col}(\xi_1,\dots,\xi_q):=\begin{bmatrix}
\xi_1^\top & \dots & \xi_q^\top    
\end{bmatrix}^\top$. For any vector $v \in\mathbb{R}^q$, we use $v_i$ to denote the $i$-th element of $v$. Here, $A_{i,j}$ denotes the element of $A$ in the $i$-th row and $j$-th column. For any finite number $q\in\mathbb{N}_{\geq 1}$ of scalars or scalar–valued functions $\{d_1,\dots,d_q\}$ we define the operator $D := \diag(d_1,\dots,d_q)\in\mathbb{R}^{q\times q}$ as the diagonal matrix satisfying $D_{i,i} = d_i$ for all $i=1,\dots,q$ and $D_{i,j}=0$ whenever $i\neq j$.  The vectorization operator $\operatorname{vec}(\cdot)$, for a matrix $A \in \mathbb{R}^{m \times n}$ is defined as \[
\operatorname{vec}(A)
:=
\begin{bmatrix}
A_{1,1} & \cdots & A_{m,1} &
\cdots &
A_{1,n} & \cdots & A_{m,n}
\end{bmatrix}^{\top}.
\]
Let $v=\text{vec}(A)$. A feedforward neural network is defined as a composition of affine maps and pointwise nonlinearities. Given an input $x \in \mathbb{R}^{n_0}$, the hidden states are recursively defined by
\begin{align}
h_0 &= x, \nonumber \\
h_\ell &= \sigma(W_\ell h_{\ell-1} + b_\ell),
\quad \ell = 1,\dots,L, \nonumber \\
f_\theta(x) &= W_{L+1} h_L + b_{L+1}.
\label{eq:nn_blueprint}
\end{align}

Here, $n_\ell \in \mathbb{N}$ for $\ell=0,\dots,L+1$, with
$W_\ell \in \mathbb{R}^{n_\ell\times n_{\ell-1}}$ and
$b_\ell \in \mathbb{R}^{n_\ell}$.
The class of feedforward neural networks is defined as
\[
\mathcal{NN}
:=
\left\{
f_\theta : \mathbb{R}^{n_0}\to\mathbb{R}^{n_{L+1}}
\,\middle|\,
\theta=\{(W_\ell,b_\ell)\}_{\ell=1}^{L+1}
\right\},
\]
where $f_\theta$ is given by \eqref{eq:nn_blueprint}. Throughout this work, the superscript $\cdot^*$ denotes optimized network parameters obtained through training. Accordingly, a trained neural network is denoted by $f_{\theta^*}$, where $\theta^*={(W_\ell^*,b_\ell^*)}_{\ell=1}^{L+1}$.

A continuous function \(\alpha:\mathbb{R}_{\geq 0}\rightarrow\mathbb{R}_{\geq 0}\) is a class-\(\mathcal{K}\) function if \(\alpha(s)>0\) for all \(s>0\), it is strictly increasing, and \(\alpha(0)=0\). It is furthermore a class-\(\mathcal{K}_{\infty}\) function if
\[
\lim_{s\rightarrow\infty}\alpha(s)=\infty.
\]
A continuous function
\[
\beta:\mathbb{R}_{\geq 0}\times\mathbb{Z}_{\geq 0}\rightarrow\mathbb{R}_{\geq 0},
\]
is a class-\(\mathcal{KL}\) function if \(\beta(s,t)\) is a class-\(\mathcal{K}\) function with respect to \(s\) for all fixed \(t\), it is strictly decreasing in \(t\) for all \(s>0\), and
\[
\lim_{t\rightarrow\infty}\beta(s,t)=0,\quad \forall s>0.
\]

\section{Preliminaries on Robust Control Contraction Theory}
\label{sec:preliminaries}
This section reviews discrete-time robust control contraction theory for a general class of uncertain nonlinear models, which provides the certificates used throughout the paper to bound the deviation between true and nominal closed-loop trajectories.
We consider nonlinear MIMO systems with discrete-time dynamics
\begin{align}\label{eqn:system}
x_{k+1} = f(x_k,u_k),
\end{align}
where \(u_k\in\mathbb{U}\subseteq\mathbb{R}^{n_u}\) is the input and
\(x_k\in\mathbb{X}\subseteq\mathbb{R}^{n_x}\) denotes the system state and $k\in\mathbb{N}$ represents a time instant.
Assume an approximate model is available:
\begin{align}\label{eqn:system_model}
x_{k+1} = \hat{f}(x_k,u_k) + w_k,
\end{align}
where \(w_k\in\mathbb{W}\subseteq\mathbb{R}^{n_x}\) represents the modeling error,
given by
\begin{align}\label{eqn:model_error}
w_k = \hat{f}(x_k,u_k)-f(x_k,u_k).
\end{align}
We assume that this modeling error is unknown and is part of a compact set $\mathbb{W}$ with zero in its interior $0\in\mathbb{W}$. In the following, we denote by
\begin{align}\label{eqn:max_model_error}
\bar{w}:= \sup\{\|w\|_2 : w \in\mathbb{W}\},
\end{align}
and by
\begin{align}\label{eqn:max_mismatch}
\bar{\delta}:= \sup\{\|w_1-w_2\|_2 : w_1,w_2 \in\mathbb{W}\}.
\end{align}
\begin{figure*}[t!]
    \centering
    \begin{minipage}{0.48\linewidth}
        \centering
        \includegraphics[width=\linewidth]{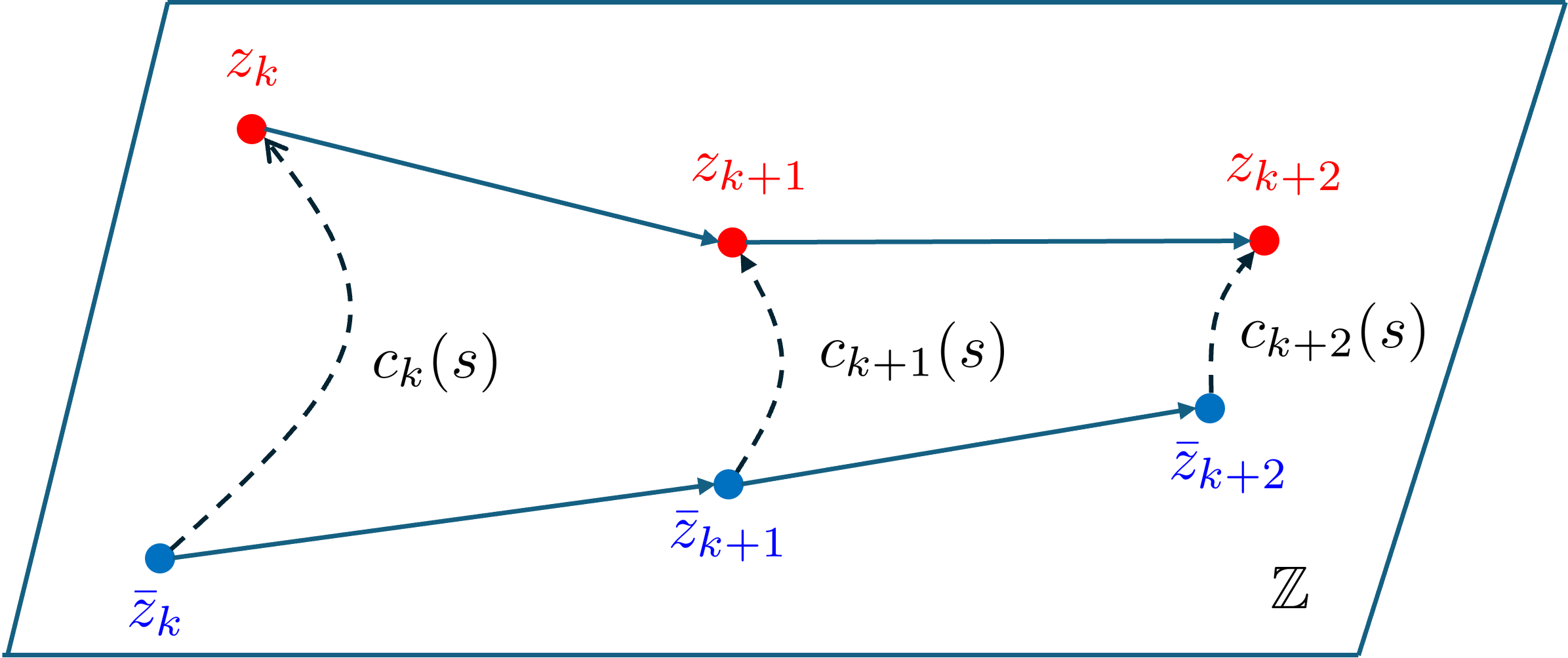}
    \end{minipage}
    \hfill
    \begin{minipage}{0.48\linewidth}
        \centering
        \includegraphics[width=\linewidth]{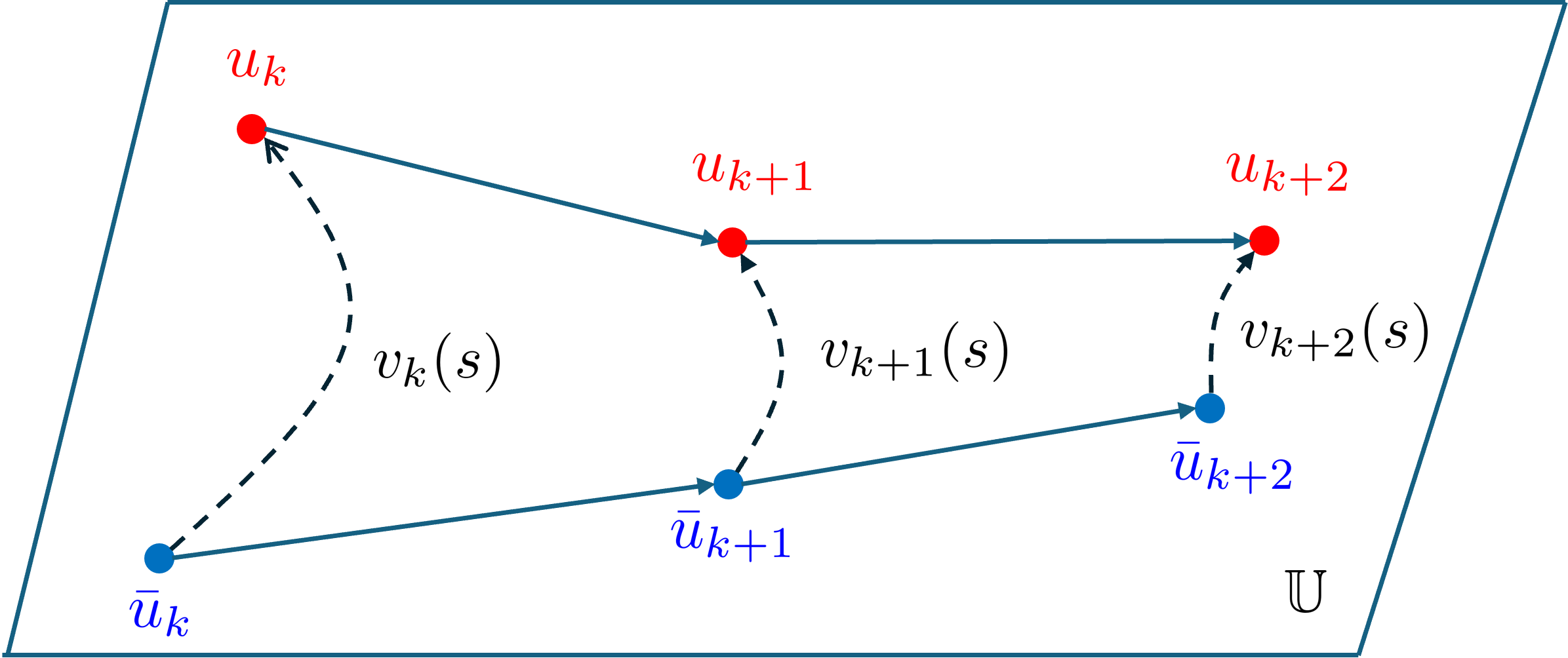}
    \end{minipage}
    \caption{Smooth paths connecting admissible state trajectories $z_k,\bar{z}_k$ and inputs $u_k,\bar{u}_k$.}
    \label{fig:d_dynamics}
\end{figure*}
Next, we introduce several relevant concepts from control contraction metric (CCM) theory \cite{LOHMILLER1998683}, with particular emphasis on robust formulations \cite{manchester2018robust} and the discrete-time setting; see, e.g., \cite{wei2021control}. In the CCM theory, we study stability of a nonlinear dynamical system \eqref{eqn:system_model} with respect to trajectories. In particular, we are interested in studying the stability properties between two arbitrary trajectories satisfying $x_{k+1} := \hat{f}(x_k,u_k) + w_k$, i.e., 
\begin{align*}
    &\mathbf{x} = (x_0,x_1,\dots), \quad \mathbf{u} = (u_0,u_1,\dots) , \quad \mathbf{w} = (w_0,w_1,\dots), \\
    &\bar{\mathbf{x}} = (\bar{x}_0,\bar{x}_1,\dots), \quad \bar{\mathbf{u}} = (\bar{u}_0,\bar{u}_1,\dots) , \quad \bar{\mathbf{w}} = (\bar{w}_0,\bar{w}_1,\dots).
\end{align*}
Next, consider an arbitrary smooth path $c_k:[0,1]\rightarrow \mathbb{R}^{n_x}$ and  $v_k:[0,1]\rightarrow \mathbb{R}^{n_u}$ which connects these two trajectories for the states and inputs. These smooth paths are defined such that $c_k(0) = \bar{x}_k$, $c_k(1) = x_k$ and $v_k(0) = \bar{u}_k$, $v_k(1) = u_k$ for all $k\in\mathbb{N}$. We can propagate forward this smooth path in time using the model  \eqref{eqn:system_model}, as follows
\begin{subequations}\label{eqn:smooth_path}
\begin{align}
& c_{k+1}(s) = \hat{f}(c_{k}(s),v_k(s)) + s w_k + (1-s)\bar{w}_k, \label{eqn:propagated_smooth_path_a}\\
& v_k(s) = \bar{u}_k + \int_0^s K(c_{k}(s))\frac{\partial c_{k}(s)}{\partial s} ds.
\end{align}
\end{subequations}
Note that by multiplying the disturbance related terms by $s$ and $(1-s)$ we get exactly the trajectory related to the case when $\bar{w}_k$ when $s=0$ and $w_k$ when $s=1$. A visualization is given in Figure~\ref{fig:d_dynamics}. By taking the derivative of \eqref{eqn:propagated_smooth_path_a} with respect to $s$ we obtain
\begin{align}\label{eqn:differential_dynamics}
    \frac{\partial c_{k+1}(s)}{\partial s} = \left( A_k + B_k K(c_k(s))\right) \frac{\partial c_k(s)}{\partial s} + w_k - \bar{w}_k,
\end{align}
where 
\begin{subequations}\label{eqn:koopman_differential}
    \begin{align}
    &A_k := \frac{\partial \hat{f}(c_k(s),v_k(s))}{\partial c_k(s)}, B_k := \frac{\partial \hat{f}(c_k(s),v_k(s))}{\partial v_k(s)}.
\end{align}
\end{subequations}
A contraction metric is defined as a positive definite, locally bounded matrix-valued function that satisfies the following conditions:
\begin{align}\label{eqn:boundedness metric}
    \alpha_1 I \preceq M(x) \preceq \alpha_2 I, \quad x\in\mathbb{X},
\end{align}
where $M:\mathbb{R}^{n_x}\rightarrow\mathbb{R}^{n_x\times n_x}$ and $\alpha_2 \geq \alpha_1 > 0$ are the smallest and largest eigenvalues for all $x\in\mathbb{X}$. Next, along the smooth curve $c_k(s)$, we define the Riemannian distance $d_{c_k}(\bar{x}_k,x_k)$ and Riemannian energy $E_{c_k}(\bar{x}_k,x_k)$ as
\begin{subequations}
    \begin{align}
        &d_{c_k}(\bar{x}_k,x_k) := \int_0^1 \sqrt{\frac{\partial c_k(s)}{\partial s}^\top M(c_k(s)) \frac{\partial c_k(s)}{\partial s} } ds, \\
        &E_{c_k}(\bar{x}_k,x_k) := \int_0^1 \frac{\partial c_k(s)}{\partial s}^\top  M(c_k(s)) \frac{\partial c_k(s)}{\partial s}  ds. \label{eqn:riemannian_energy}
    \end{align}
\end{subequations}
Furthermore, the minimum length curve or geodesic $\gamma_k$, connecting any two points is defined as 
\begin{align}\label{eqn:geodesic}
    \gamma_k(s) := \arg \min_{c_k} d_{c_k}(\bar{x}_k,x_k).
\end{align}
Using the geodesic, we define the feedback control law
\begin{subequations}\label{eqn:controller}
\begin{align}
\kappa(\bar x_k,x_k)
&:=
\int_0^1
K(\gamma_k(s))
\frac{\partial\gamma_k(s)}{\partial s}\,ds,
\\
u_k
&=
\bar u_k+\kappa(\bar x_k,x_k).
\end{align}    
\end{subequations}
We say that the system \eqref{eqn:system_model} possesses the robust contraction property when 
\begin{align}\label{eqn:robust_property}
    E_{\gamma_{k+1}}(\bar{x}_{k+1},x_{k+1}) \leq (1-\alpha) E_{\gamma_k}(\bar{x
    }_k,x_k) + \nu \|w_k- \bar{w}_k\|_2^2.
\end{align}
Property \eqref{eqn:robust_property} bounds the Euclidean distance between any pair of trajectories of the system \eqref{eqn:system_model} as presented next.
\begin{lemma}[Euclidean error bound]\label{lemma:1}
For the system \eqref{eqn:system_model}, let inequality \eqref{eqn:robust_property} hold with the bounded metric \eqref{eqn:boundedness metric}. Then, the Euclidean distance between any two trajectories satisfies
\begin{align}
\label{eqn:2norm_bound}
\|x_{k+N}-\bar x_{k+N}\|_2
&\le
\sqrt{\frac{\alpha_2}{\alpha_1}}
(1-\alpha)^{N/2}
\|x_k-\bar x_k\|_2 \nonumber
\\
&\quad+
\sqrt{\frac{\nu}{\alpha_1}}
\sum_{i=0}^{N-1}
(1-\alpha)^{i/2}
\bar{\delta}.
\end{align}
\end{lemma}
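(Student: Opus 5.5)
Let $E_k := E_{\gamma_k}(\bar x_k,x_k)$ denote the geodesic energy. We iterate the robust contraction property \eqref{eqn:robust_property} in the energy and only convert to Euclidean norms at the very end, using the metric bounds \eqref{eqn:boundedness metric}. Since $w_k,\bar w_k\in\mathbb{W}$, definition \eqref{eqn:max_mismatch} gives $\|w_k-\bar w_k\|_2\le\bar\delta$. Hence \eqref{eqn:robust_property} yields $E_{k+1}\le(1-\alpha)E_k+\nu\bar\delta^2$ for all $k$. Induction over $N$ steps then gives
\begin{align*}
E_{k+N}\le(1-\alpha)^N E_k+\nu\bar\delta^2\sum_{i=0}^{N-1}(1-\alpha)^i .
\end{align*}
This step implicitly assumes $0<\alpha\le 1$, so that $1-\alpha\ge0$ and the square roots below are real. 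That is the natural standing assumption and I would state it explicitly.

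Next, the energy is sandwiched by squared Euclidean distances. For the upper bound, the geodesic minimizes length, and the straight line $c(s)=\bar x_k+s(x_k-\bar x_k)$ is an admissible competitor. Using $M\preceq\alpha_2 I$, we get $d_{\gamma_k}\le\sqrt{\alpha_2}\|x_k-\bar x_k\|_2$. For a minimizing geodesic (constant speed, reparametrized), the energy satisfies $E_k=d_{\gamma_k}^2$, so $E_k\le\alpha_2\|x_k-\bar x_k\|_2^2$. For the lower bound, $M\succeq\alpha_1 I$ combined with Cauchy--Schwarz gives $E_k\ge\alpha_1\big(\int_0^1\|\partial_s\gamma_k\|_2\,ds\big)^2\ge\alpha_1\|x_k-\bar x_k\|_2^2$, since every curve is at least as long as the straight segment. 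The main subtlety is the identification of the energy along the length-minimizing curve \eqref{eqn:geodesic} with the squared distance. I would handle it by noting that length is parametrization invariant, so $\gamma_k$ may be taken with constant speed. Alternatively, one can simply use $d^2\le E$, valid for any curve, on the lower side, together with the straight-line energy bound $E\le\alpha_2\|\cdot\|^2$ on the upper side, provided $E_k$ is understood as the minimal energy. Either reading gives both inequalities.

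Combining the pieces gives
\begin{align*}
\alpha_1\|x_{k+N}-\bar x_{k+N}\|_2^2\le(1-\alpha)^N\alpha_2\|x_k-\bar x_k\|_2^2+\nu\bar\delta^2\sum_{i=0}^{N-1}(1-\alpha)^i .
\end{align*}
Dividing by $\alpha_1$ and taking square roots, we first apply $\sqrt{a+b}\le\sqrt a+\sqrt b$. We then apply it again to the sum, $\sqrt{\sum_i a_i}\le\sum_i\sqrt{a_i}$ for nonnegative terms. This produces exactly $\sqrt{\alpha_2/\alpha_1}(1-\alpha)^{N/2}\|x_k-\bar x_k\|_2+\sqrt{\nu/\alpha_1}\sum_{i=0}^{N-1}(1-\alpha)^{i/2}\bar\delta$, which is \eqref{eqn:2norm_bound}. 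The only delicate point is the energy--distance comparison above; everything else is a routine recursion and the use of square-root subadditivity.
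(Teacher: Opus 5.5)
Your proposal is correct and follows essentially the same route as the paper. Both arguments iterate the energy recursion \eqref{eqn:robust_property} over $N$ steps, lower-bound $E_{k+N}$ by $\alpha_1\|x_{k+N}-\bar x_{k+N}\|_2^2$, upper-bound $E_k$ by the straight-line energy $\alpha_2\|x_k-\bar x_k\|_2^2$, and finish with subadditivity of the square root. The differences are cosmetic: you bound $\|w_k-\bar w_k\|_2\le\bar\delta$ before iterating rather than at the end, and you say explicitly that the length-minimizing geodesic is taken with constant speed so that its energy equals the minimal energy, a point the paper takes for granted.
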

\begin{proof}
  For notational convenience, we suppress the function arguments and define $E_{k+N}:=E_{\gamma_{k+N}}(\bar{x}_{k+N},x_{k+N})$ and $E_k:= E_{\gamma_k}(\bar{x}_k,x_k)$. Applying the recursive inequality \eqref{eqn:robust_property} repeatedly over \(N\) time steps yields
\begin{align*}
&E_{k+N} \\
&\leq (1-\alpha)E_{k+N-1}
    +\nu\|w_{k+N-1}\|_2^2 \\
&\leq (1-\alpha)^2E_{k+N-2}
    +(1-\alpha)\nu\|w_{k+N-2}-\bar{w}_{k+N-2}\|_2^2 \\
    &\qquad \qquad \qquad \qquad \qquad \qquad +\nu\|w_{k+N-1} - \bar{w}_{k+N-1}\|_2^2 \\
&\hspace{2cm}\vdots \\
&\leq (1-\alpha)^N E_k
    +\sum_{i=0}^{N-1}
    \nu(1-\alpha)^{i}\|w_{k+N-1-i}-\bar{w}_{k+N-1-i}\|_2^2.
\end{align*}
Next, we derive a lower bound on the Riemannian energy in terms of the Euclidean distance. First, since $\alpha_1 I \preceq M(x) \preceq \alpha_2 I$ for all $x\in\mathbb{X}$, it holds that 
\[
\int_0^1
\alpha_1
\left\|
\frac{\partial \gamma_{k+N}(s)}{\partial s}
\right\|_2^2
\,ds \leq E_{k+N}.
\]
Since \(\alpha_1 I\) is a constant metric, the straight line
\[
c_{k+N}(s)=(1-s)\bar x_{k+N}+sx_{k+N}
\]
minimizes the corresponding energy. Therefore,
\[
E_{k+N}
\geq
\int_0^1
\alpha_1
\left\|
\frac{\partial c_{k+N}(s)}{\partial s}
\right\|_2^2
\,ds
=
\alpha_1
\|x_{k+N}-\bar x_{k+N}\|_2^2.
\]
Similarly, an upper bound on $E_k$ is obtained by first evaluating the energy along the straight line
\[
c_k(s)=(1-s)\bar x_{k}+sx_{k},
\]
which upper bounds the geodesic energy, and subsequently using the metric bound \(M(x)\preceq \alpha_2 I\). This yields
\begin{align*}
E_{k}
&\leq
\int_0^1
\frac{\partial c_k(s)}{\partial s}^{\!\top}
M
\frac{\partial c(s)}{\partial s}
\,ds \leq
\int_0^1
\alpha_2
\left\|
\frac{\partial c(s)}{\partial s}
\right\|_2^2
\,ds \\
&=
\alpha_2
\|x_{k}-\bar x_{k}\|_2^2.
\end{align*}
Combining the previous results yields
\begin{align*}
\|x_{k+N}-\bar x_{k+N}\|_2^2
&\leq
\frac{\alpha_2}{\alpha_1}(1-\alpha)^N
\|x_k-\bar x_k\|_2^2 \\
& \hspace{-10mm} +\frac{\nu}{\alpha_1}
\sum_{i=0}^{N-1}
(1-\alpha)^{i}
\|w_{k+N-1-i}-\bar{w}_{k+N-1-i}\|_2^2.
\end{align*}
Taking square roots and using the subadditivity of the square root gives
\begin{align*}
\|x_{k+N}-\bar x_{k+N}\|_2
&\leq
\sqrt{\frac{\alpha_2}{\alpha_1}}\,
(1-\alpha)^{N/2}
\|x_k-\bar x_k\|_2 \\
& \hspace{-15mm} +
\sqrt{\frac{\nu}{\alpha_1}}
\sum_{i=0}^{N-1}
(1-\alpha)^{\frac{i}{2}}
\|w_{k+N-1-i}-\bar{w}_{k+N-1-i}\|_2.
\end{align*}
Finally, we upper bound the disturbance related terms by $\bar{\delta}$, which completes the proof.
\end{proof}
\begin{remark}
Taking the limit $N\rightarrow\infty$ in \eqref{eqn:2norm_bound}, the first term vanishes since \(1-\alpha<1\). The remaining term becomes a geometric series, which gives
\[
\limsup_{i\rightarrow\infty}\|x_i-\bar{x}_i\|_2
\leq
\sqrt{\frac{\nu}{\alpha_1}}
\frac{\bar{\delta}}{1-\sqrt{1-\alpha}}.
\]
Thus, the distance between the two trajectories converges to a bounded value determined by the disturbance magnitude and the contraction properties.
\end{remark}

Next, we present the relevant conditions for computing contraction metrics $M$ and controllers $K$. For notational convenience, we suppress the function arguments and define
\begin{align*}
&M^{+} := M\left(\hat{f}(x_k,u_k)+ w_k\right), \\
&W^{+} := W\left((\hat{f}(x_k,u_k)+ w_k\right),
\end{align*}
where $W = M^{-1}$ is the inverse of the metric.
\begin{assumption}\label{assumption:1}
There exist a smooth matrix-valued function $W:\mathbb{X} \rightarrow\mathbb{R}^{n_x\times n_x}$, continuous function $L:\mathbb{Z}\rightarrow \mathbb{R}^{n_u\times n_x}$, a contraction rate 
\(\alpha\in(0,1)\), a disturbance gain $\nu>0$  and positive scalars \(\alpha_1,\alpha_2\) satisfying 
\(0<\alpha_1\leq \alpha_2\), such that
\[
\frac{1}{\alpha_2} I \preceq W(x_k) \preceq \frac{1}{\alpha_1}  I,
\]
for all $x_k\in\mathbb{X}$, $u_k\in\mathbb{U}$ and $w_k\in\mathbb{W}$. Furthermore, the following matrix inequality holds:
\begin{align}\label{eqn:contraction_inequality}
\begin{bmatrix}
    W^+ & AW + BL & I \\
    \left(AW + BL\right)^\top & (1-\alpha)W & 0 \\
    I & 0 & \nu I
\end{bmatrix}
\succeq 0 .
\end{align}
\end{assumption}
\noindent Next we present the robust control contraction metrics result.

\begin{theorem}\label{thrm:robust_contraction}
Consider model \eqref{eqn:system_model} with the differential dynamics \eqref{eqn:koopman_differential}, under the feedback controller \eqref{eqn:controller} with $K = L M$ and contraction metric $M = W^{-1}$. If Assumption~\ref{assumption:1} holds, then the system satisfies the robust contraction property, i.e., inequality \eqref{eqn:robust_property} holds.
\end{theorem}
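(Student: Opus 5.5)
The plan is to turn the LMI \eqref{eqn:contraction_inequality} into a pointwise differential inequality via a Schur complement and a congruence transformation. That inequality is then integrated along the propagated geodesic and compared with the geodesic energy at time $k+1$.

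\textbf{Step 1 (Schur complement in $\nu I$).} Since $\nu>0$, taking the Schur complement with respect to the $(3,3)$ block turns \eqref{eqn:contraction_inequality} into
\[
\begin{bmatrix} W^+ - \nu^{-1} I & AW+BL \\ (AW+BL)^\top & (1-\alpha)W\end{bmatrix}\succeq 0 .
\]
This is not quite the form we want, because the disturbance enters additively. So I would work with the full $3\times3$ LMI as a dissipation inequality instead. Apply the congruence $\diag(M^+, M, I)$ and substitute $K=LM$. Then take a Schur complement with respect to the $(1,1)$ block $M^+$, which is positive definite. Writing $\Phi := A+BK$, this should give
\[
\begin{bmatrix} (1-\alpha)M & 0 \\ 0 & \nu I\end{bmatrix} - \begin{bmatrix}\Phi^\top \\ I\end{bmatrix} M^+ \begin{bmatrix}\Phi & I\end{bmatrix}\succeq 0 .
\]
The intermediate $\nu$ scaling still needs checking: the $I$ entries become $M^+$ after the congruence, and the Schur complement then yields $\nu I - M^+ W^+ M^+ = \nu I - M^+$. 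The form above is therefore correct if the bottom block is $\nu$ scaled appropriately. I expect a mismatch between $\nu$ and $1/\nu$ here, which I would resolve by rescaling the definition of $\nu$. Multiplying by $\col(\delta_c,\delta_w)$ on both sides gives, for any $\delta_c,\delta_w$,
\[
(\Phi\delta_c+\delta_w)^\top M^+(\Phi\delta_c+\delta_w)\le (1-\alpha)\delta_c^\top M\delta_c + \nu\|\delta_w\|_2^2 .
\]

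\textbf{Step 2 (integration along the path).} Take $c_k=\gamma_k$, the geodesic at time $k$, with $v_k$ given by \eqref{eqn:smooth_path}. Then $u_k=v_k(1)$ agrees with the controller \eqref{eqn:controller}. Propagate the path via \eqref{eqn:propagated_smooth_path_a} to get a curve $c_{k+1}$ joining $\bar x_{k+1}$ and $x_{k+1}$. Its tangent is given by \eqref{eqn:differential_dynamics} with $\delta_c=\partial_s\gamma_k$ and $\delta_w=w_k-\bar w_k$. The LMI holds pointwise at $(c_k(s),v_k(s),\cdot)$, and $M^+$ evaluated at $c_{k+1}(s)$ is exactly the metric at the propagated point. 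Here one must note that the "$+$" argument in Assumption~\ref{assumption:1} must be read as the interpolated disturbance $s w_k+(1-s)\bar w_k\in\mathbb{W}$ when $\mathbb{W}$ is convex, or more generally as holding for all points at which it is evaluated. Integrating over $s\in[0,1]$ gives
\[
E_{c_{k+1}}\le (1-\alpha)E_{\gamma_k}+\nu\|w_k-\bar w_k\|_2^2 .
\]

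\textbf{Step 3 (geodesic minimality).} Since $\gamma_{k+1}$ minimizes energy among curves joining $\bar x_{k+1}$ and $x_{k+1}$, we have $E_{\gamma_{k+1}}\le E_{c_{k+1}}$. Energy and length minimizers coincide up to reparametrization, so this is consistent with \eqref{eqn:geodesic}. Together with Step 2 this yields \eqref{eqn:robust_property}.

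The main obstacle is Step 1: getting the congruence and Schur complement bookkeeping exactly right, so that the disturbance gain appears as $\nu$ rather than $1/\nu$, and justifying that the LMI is evaluated at the correct successor point $c_{k+1}(s)$ along the whole path. If the direct route gives $1/\nu$, I would redo the computation with the congruence $\diag(I,M,\sqrt{\nu}\,I)$ and absorb the constant into $\nu$.
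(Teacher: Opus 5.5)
Your proposal is correct and is essentially the paper's proof. You use a congruence plus Schur complement to obtain the pointwise dissipation inequality; the paper takes $\diag(I,M,I)$ and eliminates the $W^+$ block, while you take $\diag(M^+,M,I)$ and eliminate $M^+$, which is the same computation. You then test it on $\col(\partial_s\gamma_k, w_k-\bar w_k)$, integrate over $s$, and use energy-minimality of $\gamma_{k+1}$, exactly as the paper does.

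The $\nu$ versus $1/\nu$ mismatch you anticipate does not occur. Your own computation $\nu I - M^+W^+M^+ = \nu I - M^+$ already gives the gain $\nu$, so no rescaling is needed. Rescaling would not be admissible anyway, since $\nu$ is fixed by Assumption~\ref{assumption:1}.

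Your remark that the interpolated disturbance $s w_k+(1-s)\bar w_k$ must lie in $\mathbb{W}$ is a genuine point that the paper's proof passes over.
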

\begin{proof}
The proof follows that of \cite{YangCTubeDT2024} and is adapted to the bilinear Koopman setting. 
Premultiplying and postmultiplying \eqref{eqn:contraction_inequality} by $\operatorname{diag}(I,M,I)$ yields
$$
\begin{bmatrix}
    W^+ & A + BK & I \\
    (A+BK)^\top & (1-\alpha)M & 0 \\
    I & 0 & \nu I
\end{bmatrix}
\succeq 0,
$$
where we used the identities \(M=W^{-1}\) and \(K=LM\). Applying the Schur complement and subsequently multiplying both sides of the resulting inequality by $-1$, gives
$$
\begin{bmatrix}
   A_{cl}^\top M^+A_{cl} - (1-\alpha)M &
   A_{cl}^\top M^+\\
    M^+A_{cl} &
   M^+ - \nu I
\end{bmatrix}
\preceq 0,
$$
where $A_{cl}:=(A+BK)$ and the matrix inequality reverses direction because multiplication by the negative scalar \(-1\). Next, multiply from the left and right with $\col\left(\frac{\partial c_k(s)}{\partial s}, w_k-\bar{w}_k\right)$, and substitute $c_k(s)$ into $M(c_k(s))$ where $c_k(s)$ is a smooth path between two arbitrary points $x_k$, $\bar{x}_k \in \mathbb{X}$ and $c_k(s)\in\mathbb{X}$ for all $s\in[0,1]$. This gives
\begin{align*}
& \frac{\partial c_{k+1}(s)}{\partial s}^{\top} M(c_{k+1}(s))
\frac{\partial c_{k+1}(s)}{\partial s} \\
& \quad-
(1-\alpha)
\frac{\partial c_{k}(s)}{\partial s}^{\top} M(c_k(s))
\frac{\partial c_{k}(s)}{\partial s}
-
\nu\|w_k-\bar{w}_k\|_2^2
\leq 0,   
\end{align*}
where \(c_{k+1}(s)\) is the smooth path obtained by propagating \(c_k(s)\) through the model according to \eqref{eqn:differential_dynamics}. Consequently, \(c_{k+1}(s)\) connects \(x_{k+1}\) and \(\bar{x}_{k+1}\).
Integrating the above inequality with respect to \(s\) over the interval \([0,1]\) and using the definition of the Riemannian energy in \eqref{eqn:riemannian_energy} yields
\[
E_{c_{k+1}}(\bar{z}_{k+1},z_{k+1})
\leq
(1-\alpha)E_{c_k}(\bar{z}_k,z_k)
+
\nu\|w_k-\bar{w}_k\|_2^2.
\]
Without loss of generality, we choose \(c_k(s)=\gamma_k(s)\), where \(\gamma_k(s)\) is the geodesic connecting \(z_k\) and \(\bar{z}_k\). Note that the propagated path \(c_{k+1}(s)\) is not necessarily the geodesic connecting \(x_{k+1}\) and \(\bar{x}_{k+1}\). Nevertheless, since the geodesic minimizes the Riemannian distance by definition \eqref{eqn:geodesic}, it also minimizes the Riemannian energy and we have
\[
E_{\gamma_{k+1}}(\bar{z}_{k+1},z_{k+1})
\leq
E_{c_{k+1}}(\bar{z}_{k+1},z_{k+1}).
\]
Therefore, inequality \eqref{eqn:robust_property} holds which finalizes the proof.
\end{proof} 

\begin{remark}[Relation to incremental stability]\label{rem:delta_iss}
The robust contraction property \eqref{eqn:robust_property}, together with Lemma~\ref{lemma:1}, constitutes an incremental ISS ($\delta$-ISS) certificate \cite{angeli2002lyapunov}: it bounds the deviation between \emph{any} two trajectories by a $\mathcal{KL}$-term in their initial distance plus a gain acting on the disturbance difference. Incremental stability of learned models is also the enabling property in recent NMPC designs for recurrent neural network models \cite{bonassi2024nonlinear,schimperna2024robust}, where $\delta$-ISS of the model is verified a posteriori via algebraic conditions. In contrast, in our work, the incremental certificate and the associated tracking feedback are synthesized jointly via the RCCM.
\end{remark}

The next section will deal with the problem of learning an approximate model of the form \eqref{eqn:system_model} for general nonlinear systems \eqref{eqn:system} using the Koopman framework \cite{koopman1931hamiltonian}.

\section{Bilinear Koopman model learning}\label{sec:koopman}
This section develops a bilinear Koopman model from data by introducing the lifted representation and formulating a joint learning problem for the lifting function and model matrices. The resulting model provides the foundation for the robust control design developed in the following sections.

\subsection{Bilinear Koopman models}
To obtain a tractable representation for prediction and control, we employ the Koopman framework \cite{koopman1931hamiltonian}, in which the nonlinear state is lifted into a higher-dimensional space of observables. In particular, we define the lifted state
\begin{align}\label{eqn:Koopman_lifting}
    z_k := \bar{\phi}(x_k)
:= \col(x_k,\phi_{n_x+1}(x_k),\dots,\phi_{n_z}(x_k)),
\end{align}
where $z_k\in\mathbb{Z}\subseteq \mathbb{R}^{n_z}$ and \(n_z>n_x\). In this work, we include the original state in the lifting vector. Although this may not yield an exact Koopman-invariant representation, it is suitable since the Koopman model is used as an approximate model of the nonlinear system. The resulting finite-dimensional bilinear Koopman model is given by
\begin{align}\label{eqn:Koopman_approximation_true}
z_{k+1}
&=
Az_k + B_0 u_k + \sum_{i=1}^{n_u}[u_k]_i B_i z_k + w_k \\
&=: f_K(z_k, u_k,w_k), \nonumber
\end{align}
where \(w_k\in\mathbb{W}\) represents the mismatch between the nonlinear lifted dynamics and the finite-dimensional bilinear Koopman approximation, i.e., 
\begin{align}\label{eqn:approximation_error}
w_{k} := \bar{\phi}(x_{k+1}) -  A\bar{\phi}(x_k) - B_0 u_k - \sum_{i=1}^{n_u}[u_k]_i B_i \bar{\phi}(x_k).
\end{align}
We assume that this approximation error is bounded and belongs to a compact disturbance set \(\mathbb{W}\), satisfying \(w_k \in \mathbb{W}\) and \(0 \in \mathbb{W}\). In this work, $\mathbb{W}$ is characterized as a uniform norm ball estimated from data; alternative certified characterizations of the approximation error of bilinear Koopman surrogates include proportional bounds that vanish at the origin \cite{strasser2024safedmd} and deterministic kernel-based bounds \cite{strasser2025kernel}; see also \cite{strasser2026overview}.

\subsection{Bilinear Koopman Model Learning}
To train a bilinear Koopman model of the form \eqref{eqn:Koopman_approximation_true} we collected a state-input data set:
\begin{align}\label{eqn:dataset_model}
    \mathcal{D} = \{x_k,u_k\}_{k=0}^{N_s+N-2}.
\end{align}
Next, we parameterize the lifting function \eqref{eqn:Koopman_lifting} as 
\begin{align}\label{eqn:nn_lifting}
z_k := \bar{\phi}_\theta(x_k) = \col(x_k,f_\theta(x_k)),
\end{align} 
where $f_\theta\in\mathcal{NN}$ is a feedforward neural network with parameter set $\theta$. The bilinear Koopman model learning Problem is presented next. 
\begin{problem}[Bilinear Koopman model learning]\label{prob:model_learning}
    \begin{subequations}
        \begin{align}
            \min_{\theta,A,B_0,\dots,B_{n_u}}& \frac{1}{N N_s}  \sum_{i=1}^{N_s}\sum_{k=1}^{N} \frac{\|\hat{z}^{(i)}_k  - z^{(i)}_k\|_2^2}{\|z^{(i)}_k\|_2^2} + \lambda \mathcal{L}_{reg}\\
            &\hspace{-2em}\text{subject to:} \nonumber \\
            &\hspace{-2em}\hat{z}^{(i)}_{k+1} = A \hat{z}^{(i)}_{k} + B_0 u_{i+k} + \sum_{j=1}^{n_u}[ u_{k+i}]_j B_j \hat{z}^{(i)}_{k}, \\
           &\hspace{-2em}\hat{z}^{(i)}_{0} = \begin{bmatrix}
                x_i \\ f_{\theta}(x_i)
            \end{bmatrix},  \quad \hat{z}^{(i)}_{k} = \begin{bmatrix}
                x_{i+k} \\ f_{\theta}(x_{i+k})
            \end{bmatrix}.
        \end{align}
    \end{subequations}
\end{problem}
\noindent The regularization component of the objective function is given by
\[
\mathcal{L}_{reg}
=  \|\theta\|_2^2
+ \|A\|_F^2
+ \|B_0\|_F^2 + \dots 
+ \|B_{n_u}\|_F^2.
\]
where the parameter norm is defined as
\[
\|\theta\|_2^2 := \sum_{\ell=1}^{L+1} \left( \|W_\ell\|_F^2 + \|b_\ell\|_2^2 \right).
\]
Problem~\ref{prob:model_learning} simultaneously trains the lifting function $\bar{\phi}_{\theta}:=\col(x_k,f_{\theta}(x_k))$ and the model matrices $A$, $B_0,\dots,B_{n_u}$ while minimizing the multi-step prediction error over a prediction horizon $N\in\mathbb{N}$. The trained Koopman model will be denoted as:
\begin{align}\label{eqn:Koopman_approximation_trained}
z_{k+1}
&=
A^*z_k+ B^*_0 u_k + \sum_{i=1}^{n_u}[u_k]_i B^*_i z_k + w_k \\
&=: f^*_K(z_k, u_k,w_k), \nonumber
\end{align}
where $z_0 := \bar{\phi}_{\theta^*}(x_0)$.

Given the trained bilinear Koopman model \eqref{eqn:Koopman_approximation_trained}, two ingredients remain to be constructed before a robust MPC scheme can be formulated: a robust contraction certificate for the lifted dynamics, developed in Section~\ref{sec:rccm_koopman}, and a corresponding tube-based constraint tightening with terminal ingredients, developed in Section~\ref{sec:mpc}.

\section{Robust Control Contraction Theory Applied to Bilinear Koopman Models}
\label{sec:rccm_koopman}
For the trained bilinear Koopman model \eqref{eqn:Koopman_approximation_trained}, we are interested in studying the stability properties between 
two arbitrary trajectories satisfying $z_{k+1} = f^*_K(z_k,u_k,w_k)$, i.e., 
\begin{align*}
    &\mathbf{z} = (z_0,z_1,\dots), \quad \mathbf{u} = (u_0,u_1,\dots) , \quad \mathbf{w} = (w_0,w_1,\dots), \\
    &\bar{\mathbf{z}} = (\bar{z}_0,\bar{z}_1,\dots), \quad \bar{\mathbf{u}} = (\bar{u}_0,\bar{u}_1,\dots) , \quad \bar{\mathbf{w}} = (\bar{w}_0,\bar{w}_1,\dots).
\end{align*}
Next, consider an arbitrary smooth path $c_k:[0,1]\rightarrow \mathbb{R}^{n_z}$ and  $v_k:[0,1]\rightarrow \mathbb{R}^{n_u}$ which connects these two trajectories for the states and inputs. These smooth paths are defined such that $c_k(0) = \bar{z}_k$, $c_k(1) = z_k$ and $v_k(0) = \bar{u}_k$, $v_k(1) = u_k$ for all $k\in\mathbb{N}$. We can propagate forward this smooth path in time using the model  \eqref{eqn:system_model}, as follows
\begin{subequations}\label{eqn:smooth_path_koopman}
\begin{align}\label{eqn:propagated_smooth_path_koopman}
& c_{k+1}(s) = A^*c_{k}(s) + B^*_0 v_{k}(s) + \sum_{i=1}^{n_u}[v_{k}(s)]_i B^*_i c_{k}(s) \nonumber  \\
& \qquad \qquad \qquad\qquad \qquad\qquad +  s w_k + (1-s)\bar{w}_k, \\\
& v_k(s) = \bar{u}_k + \int_0^s K(c_{k}(s))\frac{\partial c_{k}(s)}{\partial s} ds.
\end{align}
\end{subequations}
Note that by multiplying the disturbance related terms by $s\in[0,1]$ we get exactly the nominal trajectory related to the case when $\bar{w}_k=0$ when $s=0$ and $w_k$ when $s=1$. A visualization is given in Figure~\ref{fig:d_dynamics}. By taking the time derivative with respect to $s$ we obtain
\begin{align}\label{eqn:differential_dynamics_koopman}
    \frac{\partial c_{k+1}(s)}{\partial s} = \left( A^*(v_k(s)) + B^*(c_k(s)) K(c_k(s)) \right) \frac{\partial c_k(s)}{\partial s} \nonumber \\+ w_k - \bar{w}_k
\end{align}
where 
\begin{subequations}\label{eqn:koopman_differential_z}
    \begin{align}
    &A^*(v_k(s)) := \left( A^* + \sum_{i=1}^{n_u} B^*_i [v_k(s)]_i\right), \\
    &B^*(c_k(s)) := \left( B^*_0 + \begin{bmatrix}
    B^*_1c_k(s) & \dots & B^*_{n_u}c_k(s) 
\end{bmatrix} \right).
\end{align}
\end{subequations}
A contraction metric is defined as a positive definite, uniformly bounded matrix-valued function that satisfies the following conditions:
\begin{align}\label{eqn:boundedness_metric_z}
    \alpha_1 I \preceq M(z) \preceq \alpha_2 I, \quad z\in\mathbb{Z}
\end{align}
where $M:\mathbb{R}^{n_z}\rightarrow\mathbb{R}^{n_z\times n_z}$ and $\alpha_2 \geq \alpha_1 > 0$ are the smallest and largest eigenvalues. The resulting contraction inequality becomes:
\begin{align}
\label{eqn:contraction_inequality_koopman}
\begin{bmatrix}
W(z_{k+1})
& A^*(u_k)W(z_k)+B^*(z_k)L(z_k)
& I\\
*
& (1-\alpha)W(z_k)
& 0\\
*
& *
& \nu I
\end{bmatrix}
\succeq 0,
\end{align}
which must hold for all $z_k\in\mathbb{Z}$, $u_k\in\mathbb{U}$ and $w_k\in\mathbb{W}$ where the disturbance enters through $z_{k+1} = f_K(z_k,u_k,w_k)$. 
\begin{remark}\label{rem:contraction_meetric_solve}
For any fixed state $z_k$ and $u_k$, inequality \eqref{eqn:contraction_inequality_koopman} is a convex linear matrix inequality (LMI) in the values of the metric $W(z)$ and controller $L(z)$. The synthesis problem is nevertheless infinite-dimensional, as the inequality must hold for all $z_k\in\mathbb{Z}$,  $u_k\in\mathbb{U}$ and  $w_k\in\mathbb{W}$. A common approach is to parameterize $W$ and $L$ by polynomials and enforce the condition using sum-of-squares (SOS) programming; see, e.g., \cite{wei2021control}. This is possible in this specific case due to the bilinearity of the Koopman model. However, SOS-based methods typically scale poorly with increasing state dimension and polynomial degree, making it not ideal for Koopman approaches which typically have a large state. Alternatively, one may discretize the state and input spaces and solve the resulting finite collection of LMIs, see, e.g., \cite{ZhaoGridding}. To improve scalability while retaining expressive function approximations, a promising alternative is to parameterize the metric \(W\) and controller \(L\) using neural networks and enforce the contraction condition over a representative set of sampled states during training; see, e.g., \cite{li2025neural}.
\end{remark}

\subsection{Neural Contraction Metric Learning}
\begin{figure}[t]
\centering
\input{figures/metric_learning_sparse}
\caption{Illustration of the lifting map $\bar{\phi}$ from the state space to the lifted state space. The image of the lifting map typically occupies only a sparse subset of the lifted space $\mathbb{R}^{n_z}$.}
\label{fig:sparse_sampling}
\end{figure}
As discussed in Remark~\ref{rem:contraction_meetric_solve}, a key challenge in computing a contraction metric and controller is that the contraction inequality \eqref{eqn:contraction_inequality_koopman} must hold for all \(z\in\mathbb{Z}\) and \(u\in\mathbb{U}\). This becomes particularly challenging for Koopman models, which typically have a high-dimensional lifted state space. The lifted state set is defined as the image of the state constraint set under the trained lifting function:
\begin{align}
    \mathbb{Z}
    :=
    \left\{
    z\in\mathbb{R}^{n_z}
    :
    z=\bar{\phi}_{\theta^*}(x),\;
    x\in\mathbb{X}
    \right\}.
\end{align}
Although \(\mathbb{Z}\subseteq\mathbb{R}^{n_z}\), it typically occupies only a small subset of the lifted space, as illustrated in Figure~\ref{fig:sparse_sampling}. This observation motivates the proposed sparse sampling strategy, which is combined with a neural network parameterization of the contraction metric and controller to efficiently learn contraction certificates for bilinear Koopman models.

To learn a contraction metric for the trained Koopman model $f^*_K$ we generate an artificial data set
\begin{align}\label{eqn:dataset_metric}
    \mathcal{D}_M = \{\hat{x}_k,\hat{u}_k,\hat{w}_k\}_{k=0}^{N_M},
\end{align}
where \(\hat{x}_k\), \(\hat{u}_k\), and \(\hat{w}_k\) are generated such that the sets \(\mathbb{X}\), \(\mathbb{U}\), and \(\mathbb{W}\) are covered as extensively as possible. The \(\hat{\cdot}\) notation indicates that these samples are virtual points and therefore can be selected arbitrarily without requiring simulations of the actual system. The corresponding lifted states are obtained as $\hat{z}_k:=\bar{\phi}_{\theta^*}(\hat{x}_k)$. To account for the fact that the robust MPC controller will operate in a neighborhood around the lifted manifold, additional off-manifold samples are generated by perturbing the lifted states:
\[
\hat{z}_{k,j}=\hat{z}_{k}+\epsilon_{k,j},
\]
where \(\epsilon_{k,j}\) is a small perturbation. These augmented samples allow the contraction metric and controller to be learned not only on the lifted state manifold, but also in a neighborhood around it, which is required since we will allow for deviations of the measured initial condition, see the robust MPC Problem~\ref{Prob:robust_MPC}. Next, the Koopman model is used to propagate the samples according to
\[
\hat{z}_{k,j}^{+}=f^*_K(\hat{z}_{k,j},\hat{u}_k,\hat{w}_k).
\]
The metric learning problem is then formulated as follows.
\begin{problem}[Metric learning]\label{prob:metric_learning}
\begin{subequations}
\begin{align}
&\min_{\theta_w,\theta_l}\quad 
\sum_{i=1}^{N_M}\sum_{j=1}^{N_\epsilon}\sum_{q=1}^{n_z}
\big(\mathrm{softplus}(\varepsilon - \lambda_q(M_{i,j}))\big)^2 
+ \gamma_c \mathcal{L}_{cond}
\\
&\text{s.t.}\quad \nonumber \\
& M_{i,j} = \nonumber \\
&\begin{bmatrix}
            W_{\theta_w}(\hat{z}^+_{i,j}) & A^*(\hat{u}_i) W_{\theta_w}(\hat{z}_{i,j}) + B^*(\hat{z}_{i,j}) L_{\theta_l}(\hat{z}_{i,j}) & I \\
            * & (1-\alpha)W_{\theta_w}(\hat{z}_{i,j}) & 0 \\
            *& * & \nu I
        \end{bmatrix} \label{eqn:contraction_conbstraint}
    \end{align}
\label{eqn:prob_2_1}
\end{subequations}
\end{problem}
\noindent Above, we parameterize the metric as
$$
W_{\theta_w}(z) := L_{\theta}(z)L^{\top}_{\theta}(z) + \gamma  I, \quad \gamma> 0,
$$
where $L_{\theta}(z)\in\mathbb{R}^{L\times L}$ is a lower triangular matrix
\begin{align}
     L_{\theta}(z) = \begin{bmatrix}
         [\ell_{\theta_\ell}]_1 & 0  & \dots & 0 \\
         [\ell_{\theta_\ell}]_2 & [\ell_{\theta_\ell}]_3  & \dots & 0 \\
         \vdots & \vdots & \ddots & \vdots \\
         [\ell_{\theta_\ell}]_{\frac{2+L(L-1)}{2}} & [\ell_{\theta_\ell}]_{\frac{4+L(L-1)}{2}} & \dots & [\ell_{\theta_\ell}]_{\frac{L(L+1)}{2}} \\
     \end{bmatrix},
\end{align}
constructed from $\ell_{\theta_\ell} : \mathbb{R}^{n_z} \to \mathbb{R}^{\frac{n_z(n_z+1)}{2}}$, which is a feedforward neural network, i.e. $\ell_{\theta_\ell}\in\mathcal{NN}$ with parameter set $\theta_\ell$ and input $z$. This guarantees that $W_{\theta_w}$ is positive definite and symmetric. The softplus function is defined as $\text{softplus}(\beta-x) := \ln(1 + \exp(\beta-x))$. A visualization of the softplus function is given in Figure~\ref{fig:softplus}. By using this function in the cost we penalize negative eigenvalues. As a result, positive definiteness of $M_{i,j}$ is promoted.
\begin{figure}[t]
\centering
\includegraphics[width=0.6\linewidth]{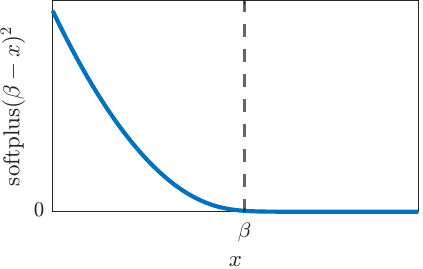}
\caption{Visualization of the softplus cost function used to enforce positive definiteness.}
\label{fig:softplus}
\end{figure}
An additional regularization term $\mathcal{L}_{cond}$ is added to minimize the conditioning number of the metric as this will define the tube size later. The regularizer that is used is given by: 
\begin{align}\label{eqn:conditioning_penalty}
    \mathcal{L}_{cond} = &\big(\beta - \lambda_j( W_{\theta_w}(\hat{z}_i))\big)^2,
\end{align}
This regularizer minimizes the conditioning number, but $\beta>0$ has to be chosen carefully. The minimization of the conditioning number is crucially for obtaining a tight error bound between trajectories, see Lemma~\ref{lemma:1} which is key for the robust MPC scheme that will be presented in the next section. The complete procedure is summarized in Algorithm~\ref{alg:sparse_sampling}.

\begin{algorithm}[t]
\caption{\emph{Offline} neural contraction metric learning}
\label{alg:sparse_sampling}
\begin{algorithmic}[1]
\Require Trained Koopman model $f^*_K$, lifting map $\bar{\phi}_{\theta^*}$, perturbation magnitudes $\delta^{\epsilon}_{j}>0$, $j=1,\ldots,N_{\epsilon}$, where $N_{\epsilon}\in\mathbb{N}$, state, input and disturbance sets $\mathbb{X},\mathbb{U},\mathbb{W}$.
\State Generate a sparse virtual dataset
\[
\mathcal{D}_M=\{\hat{x}_i,\hat{u}_i,\hat{w}_i\}_{i=1}^{N_M},
\]
covering $\mathbb{X}$, $\mathbb{U}$ and $\mathbb{W}$.

\For{each sample $(\hat{x}_i,\hat{u}_i,\hat{w}_i)\in\mathcal{D}_M$}
    \State Compute the lifted state
    \[
    \hat{z}_i=\bar{\phi}_{\theta^*}(\hat{x}_i).
    \]
    \State Generate locally perturbed lifted states
    \[
    \hat{z}_{i,j}=\hat{z}_i+\epsilon_{i,j},
    \qquad
    \epsilon_{i,j}\sim\mathcal{U}([-\delta^{\epsilon}_{j},\delta^{\epsilon}_{j}]^{n_z}),
    \]
    for $j=1,\dots,N_{\epsilon}$.
    \State Propagate the perturbed lifted states using the trained Koopman model
    \[
    \hat{z}_{i,j}^{+}
    =
    f^*_K(\hat{z}_{i,j},\hat{u}_i,\hat{w}_i).
    \]
\EndFor

\State Parameterize the contraction metric $W_{\theta_w}$ and controller $L_{\theta_l}$ using neural networks.

\State Solve Problem~\ref{prob:metric_learning}.

\State \Return trained metric $W_{\theta^*_w}$ and controller $L_{\theta^*_l}$.
\end{algorithmic}
\end{algorithm}

\section{Bilinear Koopman Robust MPC Scheme and Theoretical Guarantees}
\label{sec:mpc}
This section presents the main result of the paper: a robust MPC scheme for bilinear Koopman models with recursive feasibility, constraint satisfaction and closed-loop input-to-state stability guarantees. The complete framework is presented as a diagram in Figure~\ref{fig:control_architecture}.
\subsection{Robust MPC problem}
\begin{figure*}[t]
\centering
\input{figures/diagram}
\caption{Architecture of the proposed robust MPC scheme with geodesic-based contractive feedback implementation.}
\label{fig:control_architecture}
\end{figure*}
Consider the trained bilinear Koopman dynamics \eqref{eqn:Koopman_approximation_trained} with zero disturbance, i.e.,  $\bar{z}_{k+1} := f^*_K(\bar{z}_k,\bar{u}_k,0)$. Next, treating the state $\bar{z}_k$ as a scheduling variable, the dynamics can be written as
\begin{align}
    \bar{z}_{k+1}
    =
    A^*\bar{z}_k
    +
    \left(
        B^*_0 +
        \begin{bmatrix}
            B^*_1 \bar{z}_k & \dots & B^*_{n_u} \bar{z}_k
        \end{bmatrix}
    \right)\bar{u}_k .
\end{align}
This can be described in the \emph{exact} LPV form
\begin{align}
    \bar{z}_{k+1}
    =
    A^*\bar{z}_k + \bar{B}^*(\rho_k)\bar{u}_k,
\end{align}
where the scheduling variable $\rho_k =\bar{z}_k$. Let $\boldsymbol{\rho}_k:= \col(\rho_{0|k},\dots,\rho_{N-1|k})$, then 
\begin{align}\label{eqn:koopman_LPV}
\bar{\mathbf{z}}_k = \Phi  z_{0|k} + \Gamma(\boldsymbol{\rho}_k) \bar{\mathbf{u}}_k,
\end{align}
where the multi-step prediction matrices are given by :
\begin{subequations}
    \begin{align}\label{eqn:multi-step_matrices}
    &\Phi = \begin{bmatrix}
       A^* \\
       A^{*^2} \\
        \vdots \\
        A^{*^N}
    \end{bmatrix}, \\
    &\Gamma(\boldsymbol{\rho}_k) = \nonumber \\
    &\begin{bmatrix}
        \bar{B}^*(\rho_{0|k}) & \mathbf{0}  & \dots & \mathbf{0}\\
        A^*\bar{B}^*(\rho_{0|k}) & \bar{B}^*(\rho_{1|k})  & \dots & \mathbf{0}  \\ 
        \vdots & \vdots  & \ddots & \vdots \\
        A^{*^{N-1}} \bar{B}(\rho_{0|k}) & A^{*^{N-2}} \bar{B}^*(\rho_{1|k}) & \dots & \bar{B}^*(\rho_{N-1|k}) 
    \end{bmatrix}.
\end{align} 
\end{subequations}
We now present the MPC problem.
\begin{problem}[Robust MPC for bilinear Koopman models]\label{Prob:robust_MPC}
\begin{subequations}
\begin{align}
\min_{z_{0|k}, \bar{\mathbf{z}}_k,\bar{\mathbf{u}}_k}\quad
&\ell_N(\bar{z}_{N|k},r^z)
+\sum_{i=0}^{N-1}
\ell(\bar{z}_{i|k},r^z,\bar{u}_{i|k},r^u)
\\
\text{s.t.}\quad
&\bar{\mathbf{z}}_k
=
\Phi\bar{z}_{0|k}
+
\Gamma(\boldsymbol{\rho}_k)\bar{\mathbf{u}}_k,
\\ 
& \|z_{0|k} - z^*_{1|k-1}\|_2 \leq \bar{w}, \label{eqn:robust_MPC_c3} \\
&h^u(\bar{u}_{i|k})
+
c^u\delta_{\text{max}}
\leq \mathbf{0},
\\
&h^z(\bar{z}_{i|k})
+
c^z\delta_{\text{max}}
\leq \mathbf{0},
\\
& h_T(z) \leq \mathbf{0}.
\end{align}
\end{subequations}
\end{problem}
The constraint \eqref{eqn:robust_MPC_c3} is required to achieve recursive feasibility and stability, as it ensures that the sequence of optimized initial conditions is a valid trajectory of the uncertain Koopman model \eqref{eqn:Koopman_approximation_true}; its role is made precise in the proof of Theorem~2. The constant $\delta_{\text{max}}$ is given by:
\begin{align}\label{eqn:tightening_constant}
    \delta_{\text{max}} := \frac{\sqrt{\nu} \bar{\delta}}{1-\sqrt{1-\alpha}}.
\end{align}
The functions $h^z:\mathbb{R}^{n_z}\rightarrow \mathbb{R}^{N_z}$, $h^u:\mathbb{R}^{n_u}\rightarrow \mathbb{R}^{N_u}$ and $h_T:\mathbb{R}^{n_z}\rightarrow \mathbb{R}^{N_z}$ are the functions defining the input, state and terminal constraint sets, i.e.,
\begin{subequations}
\begin{align}
\mathbb{Z}
&:= \left\{
z\in\mathbb{R}^{n_z}
\mid
h^z(z)\leq \mathbf{0}
\right\},\\
\mathbb{U}
&:= \left\{
u\in\mathbb{R}^{n_u}
\mid
h^u(u)\leq \mathbf{0}
\right\}, \\
\mathbb{Z}_T
&:= \left\{
z\in\mathbb{R}^{n_z}
\mid
h_T(z)\leq \mathbf{0}
\right\}.
\end{align}    
\end{subequations}
The tightened constraints are defined as:
\begin{subequations}
\begin{align}
\bar{\mathbb{Z}}
&:= \left\{
z\in\mathbb{R}^{n_z}
\mid
h^z(z) + c^z\delta_{\text{max}}\leq \mathbf{0}
\right\},\\
\bar{\mathbb{U}}
&:= \left\{
u\in\mathbb{R}^{n_u}
\mid
h^u(u)+ c^u_j\delta_{\text{max}}\leq \mathbf{0}
\right\}.
\end{align}    
\end{subequations}
The variable $\delta_{\text{max}}$ together with constants $c^z\in\mathbb{R}^{N_z}$ and $c^u\in\mathbb{R}^{N_u}$ determine the constraint tightening. The constants are given by 
\begin{subequations}\label{eqn:constraint_tightening_constant}
    \begin{align}
     &c^z_j := \max_{\bar{z},\in\mathbb{Z}} \left\| \frac{\partial h^z_j}{\partial z}\bigg\vert_{\bar{z}}  M(\bar{z})^{1/2} \right\|, \label{eqn:conditions_constraint_satisfaction_1} \\
             &c^u_j := \max_{(\bar{z},\bar{u})\in\mathbb{Z}\times \mathbb{U}} \left\|  \frac{\partial h^u_j}{\partial u}\bigg\vert_{\bar{u}} K(\bar{z}) M(\bar{z})^{1/2} \right\|. \label{eqn:conditions_constraint_satisfaction_3}  
\end{align}
\end{subequations}
The solution to Problem~\ref{Prob:robust_MPC} is the optimal initial condition $\bar{z}^*_{0|k}$ and the sequences:
\begin{align*}
    &\bar{\mathbf{u}}^*_k = \col{(\bar{u}^*_{0|k},\bar{u}^*_{1|k},\dots,\bar{u}^*_{N-1|k})}, \\
    &\bar{\mathbf{z}}^*_k = \col{(\bar{z}^*_{1|k},\bar{z}^*_{2|k},\dots,\bar{z}^*_{N|k})}.
\end{align*}
Next, the geodesic in the lifted state space is found as the minimizer of 
\begin{align}\label{eqn:geodesic_KOOPMAN}
    \gamma^*_{0|k}(s) := \arg \min_{c_k} d_{c_k}(\bar{z}_{0|k}^*,z_k).
\end{align}
Finally, the feedback controller is defined as:
\begin{align}\label{eqn:feedback_contractive}
u_k
&= u_{0|k}^*
+ \underbrace{\int_{0}^{1}
K(\gamma_{0|k}^*(s))
\frac{\partial \gamma_{0|k}^*(s)}{\partial s}\,ds}_{:=\kappa(z_k,\bar{z}_{0|k}^*)}.
\end{align}
The complete feedback controller architecture and its underlying computations are summarized in the block scheme depicted in Figure~\ref{fig:control_architecture}.
\begin{remark}\label{eqn:initial_condition_penalty}
Problem~\ref{Prob:robust_MPC} does not explicitly use the measured state $z_k=\bar{\phi}_{\theta^*}(x_k)$ and therefore operates in a nominal, open-loop manner. The contractive feedback controller~\eqref{eqn:feedback_contractive} keeps the optimized initial-state trajectory $\bar{z}_{0|k}^*$ close to the actual trajectory $z_k$. For improved performance, we recommend adding a regularization term that penalizes their deviation, e.g.,
$\ell_0(\bar{z}_{0|k},z_k) = \gamma_0\|z_{0|k}-z_k\|_2^2$ with $\gamma_0>0$. 
\end{remark}
\begin{remark}
 To compute the geodesic in~\eqref{eqn:geodesic_KOOPMAN} and the feedback control law \eqref{eqn:feedback_contractive}, we employ the Chebyshev pseudospectral method \cite{gong2009chebyshev}. Furthermore, Section~\ref{sec:computation} presents an efficient sequential quadratic programming (SQP) algorithm based on the Chebyshev pseudospectral discretization for solving the geodesic problem~\eqref{eqn:geodesic_KOOPMAN} efficiently online.
\end{remark}

For the reference points $r^z\in\mathbb{Z}$ and $r^u\in\mathbb{U}$, we use the following quadratic stage and terminal costs in Problem~\ref{Prob:robust_MPC}:
\begin{align}
\ell_N(\bar{z}_{N|k})
&:= (\bar{z}_{N|k}-r_{N|k})^\top
P(\bar{z}_{N|k}-r_{N|k}),\\
\ell(\bar{z}_{i|k},r^z,\bar{u}_{i|k},r^u)
&:= (\bar{z}_{i|k}-r^z)^\top
Q(\bar{z}_{i|k}-r^z) \nonumber\\
&\quad+
(\bar{u}_{i|k}-r^u)^\top
R(\bar{u}_{i|k}-r^u).
\end{align}
We assume that $P\succ0$, $Q\succeq0$, and $R\succ0$. Given a desired state reference $r^x$, the corresponding lifted state and steady-state input references are defined as
\begin{align}
    r^z := \bar{\phi}_{\theta^*}(r^x), 
    \qquad
    r^z = A^*r^z + \bar{B}^*(r^z)r^u.
\end{align}
The second relation requires $(r^z,r^u)$ to be an equilibrium pair of the nominal bilinear Koopman model; given $r^z$, a suitable steady-state input $r^u$ can be computed by (approximately) solving the corresponding least-squares problem.

The assumptions concerning the terminal set $\mathbb{Z}_T$ and terminal cost $P$ are presented next.
\begin{assumption}\label{assm:terminal_set}
Given reference points \(r^z\in\bar{\mathbb{Z}}\) and \(r^u\in\bar{\mathbb{U}}\), there exist a stabilizing linear state-feedback controller
\[
u_k=r^u+\bar{K}(z_k-r^z),
\]
where $\bar{K}\in\mathbb{R}^{n_u\times n_z}$, a symmetric positive definite matrix \(P\succ0\), and a compact terminal set
\(\mathbb{Z}_T\subseteq\bar{\mathbb{Z}}\) containing \(r^z\) in its interior such that, for all
\(\rho\in\mathbb{Z}_T\),
\begin{align*}
(A^*+\bar{B}^*(\rho)\bar{K})(\mathbb{Z}_T-r^z)
&\subseteq
(\mathbb{Z}_T-r^z),\\
\bar{K}(\bar{\mathbb{Z}}_T-r^z)
&\subseteq
(\bar{\mathbb{U}}-r^u),\\
(A^*+\bar{B}^*(\rho)\bar{K})^\top P(A^*+\bar{B}^*(\rho)\bar{K})-P
&\preceq
-Q-\bar{K}^\top R\bar{K}.
\end{align*}
\end{assumption}

\begin{theorem}[Constraint satisfaction]
Assume at time instant $k=0$ that Problem~\ref{Prob:robust_MPC} was solved with $z^*_{0|0} = z_0$. Further, Let assumption~\ref{assumption:1} hold and assume that Problem~\ref{Prob:robust_MPC} is feasible at time instant \(k\), where
\begin{align*}
\bar{\mathbf{u}}^f_k
&=
\{\bar{u}^f_{0|k},\bar{u}^f_{1|k},\dots,\bar{u}^f_{N-1|k}\}, \quad \bar{\mathbf{z}}^f_k 
=
\{\bar{z}^f_{0|k},\bar{z}^f_{1|k},\dots,\bar{z}^f_{N|k}\},
\end{align*}
are arbitrary feasible input and state sequences. Let
\begin{align*}
\mathbf{u}^f_k = \{u^f_{0|k},u^f_{1|k},\dots,u^f_{N-1|k}\}, 
\quad \mathbf{z}^f_k = \{z^f_{0|k},z^f_{1|k},\dots,z^f_{N|k}\},
\end{align*}
denote the corresponding closed-loop input and state sequence obtained by
applying the contraction-based feedback law \eqref{eqn:feedback_contractive}, i.e., 
\begin{align}
    u^f_{i|k}
=
\bar{u}^f_{i|k}
+
\kappa(z^f_{i|k},\bar{z}^f_{i|k}),
\qquad i=0,\dots,N-1,
\end{align}
to the system \eqref{eqn:system}, with \(z^f_{0|k}=\bar{\phi}_{\theta^*}(x_k)\). Then the closed-loop trajectory satisfies
$z^f_{i|k}\in\mathbb{Z}$, $u^f_{i|k}\in\mathbb{U}$,
for  $i=0,\dots,N-1$ and $z^f_{N|k}\in\mathbb{Z}$.
\end{theorem}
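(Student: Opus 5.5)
The plan is to bound, at every prediction step, the Riemannian distance between the closed-loop lifted state $z^f_{i|k}$ and the feasible nominal state $\bar z^f_{i|k}$ by $\delta_{\max}$. Then a first-order argument along the geodesic turns this into satisfaction of each constraint through the tightening constants $c^z_j$ and $c^u_j$.

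\emph{Step 1: uniform tube bound.} View the closed-loop sequence $z^f_{i|k}$ as a trajectory of the uncertain bilinear model \eqref{eqn:Koopman_approximation_trained} with disturbance $w_{k+i}\in\mathbb{W}$ defined by \eqref{eqn:approximation_error}. Its input is $u^f_{i|k}=\bar u^f_{i|k}+\kappa(z^f_{i|k},\bar z^f_{i|k})$. View $\bar z^f_{i|k}$ as a trajectory of the same model with $\bar w=0$. By Assumption~\ref{assumption:1} and Theorem~\ref{thrm:robust_contraction}, applied in the lifted space with the differential dynamics \eqref{eqn:differential_dynamics_koopman}, the geodesic energy $E_i:=E_{\gamma_i}(\bar z^f_{i|k},z^f_{i|k})$ satisfies $E_{i+1}\le(1-\alpha)E_i+\nu\bar\delta^2$. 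On geodesics $d^2=E$, so $d_{i+1}\le\sqrt{1-\alpha}\,d_i+\sqrt{\nu}\bar\delta$. Induction then gives $d_i\le(1-\alpha)^{i/2}d_0+\delta_{\max}(1-(1-\alpha)^{i/2})$, and hence $d_i\le\delta_{\max}$ provided $d_0\le\delta_{\max}$.

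\emph{Step 2: the initial distance.} This is where the hypothesis $z^*_{0|0}=z_0$ and constraint \eqref{eqn:robust_MPC_c3} enter, and I expect it to be the main obstacle. At $k=0$ we have $d_0=0$. For $k>0$, I would argue inductively over the closed loop. The actual state $z_k$ is the image under the true dynamics of $z_{k-1}$, which tracked $\bar z^*_{0|k-1}$, so by the Step~1 recursion $d(z^*_{1|k-1},z_k)\le\sqrt{1-\alpha}\,\delta_{\max}+\sqrt{\nu}\bar\delta=\delta_{\max}$. The difficulty is that the new initial nominal state $\bar z_{0|k}$ is only required to lie within $\bar w$ of $z^*_{1|k-1}$, and not to be $z^*_{1|k-1}$ itself. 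I would handle this by interpreting $\bar z_{0|k}-z^*_{1|k-1}$ as an admissible disturbance realization in $\mathbb{W}$ for the \emph{nominal} trajectory. The difference between the two disturbances then stays bounded by $\bar\delta$, which is exactly why the recursion uses $\bar\delta$ rather than $\bar w$. In this way $\{\bar z^*_{0|k}\}_k$ is itself a trajectory of the uncertain model, and the contraction recursion applies between it and the true lifted trajectory, giving $d_0\le\delta_{\max}$ at every $k$.

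\emph{Step 3: constraints.} Let $\gamma$ be the geodesic from $\bar z^f_{i|k}$ to $z^f_{i|k}$. By the mean value theorem along $\gamma$,
$h^z_j(z^f_{i|k})=h^z_j(\bar z^f_{i|k})+\int_0^1\frac{\partial h^z_j}{\partial z}\big|_{\gamma(s)}M^{-1/2}M^{1/2}\gamma'(s)\,ds$.
The integral is bounded by $c^z_j\int_0^1\|M^{1/2}\gamma'\|\,ds=c^z_j d_i\le c^z_j\delta_{\max}$. Here I implicitly use the definition \eqref{eqn:conditions_constraint_satisfaction_1}, reading $M^{1/2}$ as the appropriate normalization (i.e.\ $M^{-1/2}$), and assume $\gamma\subset\mathbb{Z}$. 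The tightened constraint $h^z_j(\bar z^f_{i|k})+c^z_j\delta_{\max}\le0$ then yields $z^f_{i|k}\in\mathbb{Z}$, including $i=N$, since $\mathbb{Z}_T\subseteq\bar{\mathbb{Z}}$.

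For the inputs, $u^f_{i|k}-\bar u^f_{i|k}=\int_0^1K(\gamma)\gamma'\,ds$. Linearizing $h^u_j$ (or using convexity), the increment is bounded by $c^u_j\int\|M^{1/2}\gamma'\|\,ds\le c^u_j\delta_{\max}$. Hence $h^u_j(u^f_{i|k})\le0$.
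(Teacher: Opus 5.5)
The route is the paper's. Your Steps~1--2 obtain the tube bound from the robust contraction recursion, with zero initial energy at $k=0$ and constraint \eqref{eqn:robust_MPC_c3} used to treat $\bar z^*_{0|k}$ as a trajectory of the uncertain Koopman model. Your fixed-point form $d_{i+1}\le\sqrt{1-\alpha}\,d_i+\sqrt{\nu}\bar\delta$ is equivalent to the paper's unrolling from time $0$. Both arguments tacitly need the $\bar w$-ball to lie inside $\mathbb{W}$, which holds for the norm-ball $\mathbb{W}$ of the example. Your reading of $M^{1/2}$ in \eqref{eqn:conditions_constraint_satisfaction_1} as $M^{-1/2}$ is the one that makes the Cauchy--Schwarz step valid. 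You diverge in Step~3. The paper invokes Proposition~5 of Sasfi et al.: if the tightened constraints hold with $d$ in place of $\delta_{\max}$, then $\gamma(s)\in\mathbb{Z}$ and $v(s)\in\mathbb{U}$ for all $s\in[0,1]$.

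That path containment is what your proposal lacks, and the gap is genuine.

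\emph{Circularity.} You assume $\gamma\subset\mathbb{Z}$ so the integrand can be bounded by $c^z_j$, a maximum over $\mathbb{Z}$. But $\gamma(1)=z^f_{i|k}$, so the assumption already contains the conclusion $z^f_{i|k}\in\mathbb{Z}$, and the state part is circular.

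\emph{Other uses of the same containment.} You also need it, together with $v(s)\in\mathbb{U}$, for the input bound, since $c^u_j$ is a maximum over $\mathbb{Z}\times\mathbb{U}$. Linearizing $h^u_j$ is only approximate, and convexity gives $h^u_j(u)\ge h^u_j(\bar u)+\nabla h^u_j(\bar u)(u-\bar u)$, which points the wrong way. You need it already in Step~1 too, because \eqref{eqn:contraction_inequality_koopman} is only required on $\mathbb{Z}\times\mathbb{U}\times\mathbb{W}$.

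\emph{How to close it.} Use a first-exit argument.
\begin{itemize}
\item Take the constant-speed geodesic, so $\|M^{1/2}\gamma'(s)\|_2=d_i\le\delta_{\max}$.
\item Let $s^*$ be the supremum of those $s$ with $(\gamma(\tau),v(\tau))\in\mathbb{Z}\times\mathbb{U}$ for all $\tau\in[0,s]$.
\item Integrate exactly along $\gamma$ and along $v$ on $[0,s^*]$, using $v'=K(\gamma)\gamma'$. This gives $h^z_j(\gamma(s^*))\le -c^z_j(\delta_{\max}-s^*d_i)$ and $h^u_j(v(s^*))\le -c^u_j(\delta_{\max}-s^*d_i)$.
\item If $s^*<1$, both bounds are strictly negative (for $c^z_j,c^u_j>0$). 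By continuity the path stays feasible beyond $s^*$, a contradiction.
\end{itemize}
Steps~1 and~3 then become a single induction over $i$: $d_i\le\delta_{\max}$ gives path containment, which licenses the contraction LMI along the path and yields $d_{i+1}\le\delta_{\max}$.
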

\begin{proof}
Since Problem~\ref{Prob:robust_MPC} is feasible, then every element of $\bar{\mathbf{u}}^f_k$ and $\bar{\mathbf{z}}^f_k$ belongs to the corresponding admissible sets, i.e.,
$$
\bar{u}^f_{i|k}\in\bar{\mathbb{U}},\quad i=0,\ldots,N-1,
\qquad
\bar{z}^f_{i|k}\in\bar{\mathbb{Z}},\quad i=0,\ldots,N.
$$
    By Proposition~5 in \cite{sasfi2023robust}, we have that for any $\bar{z}^f_{i|k}, z^f_{i|k}\in\mathbb{R}^{n_z}$ and $\bar{u}^f_{i|k}\in\mathbb{R}^{n_u}$, satisfying
\begin{align*}
    &h^u(\bar{u}^f_{i|k})
+
c^u d_{\gamma_{i|k}}(\bar{z}^f_{i|k},z^f_{i|k})
\leq 0, \\
&h^z(\bar{z}^f_{i|k})
+
c^z d_{\gamma_{i|k}}(\bar{z}^f_{i|k},z^f_{i|k})
\leq 0, 
\end{align*}
that 
$v_{i|k}(s)\in\mathbb{U}$ and $\gamma_{i|k}(s)\in\mathbb{Z}$ for all $s\in[0,1]$, where 
$$
v_{i|k}(s) = \bar{u}^f_{i|k} + \int_0^s K(\gamma_{i|k}(s)\frac{\partial \gamma_{i|k}(s)}{\partial s}ds, 
$$
and $\gamma_{i|k}(s)$ is the geodesic between $\gamma_{i|k}(1) =z ^f_{i|k}$ and $\gamma_{i|k}(0) =\bar{z}^f_{i|k}$ and $v_{i|k}(1) = u^f_{i|k}$ and $v_{i|k}(0) =\bar{u}^f_{i|k}$. To guarantee constraint satisfaction of every element of $\mathbf{u}^f_k$ and $\mathbf{z}^f_k$, i.e.,
$$
u^f_{i|k}\in\mathbb{U},\quad i=0,\ldots,N-1,
\qquad
z^f_{i|k}\in\mathbb{Z},\quad i=0,\ldots,N,
$$
it is sufficient that 
$$
d_{\gamma_k}(\bar{z}^f_{i|k},z^f_{i|k}) \leq \delta_{\text{max}}, \quad i = 0,\dots,N,
$$    
since this imposes a more strict constraint tightening. Recall that Theorem~\ref{thrm:robust_contraction} provides that under Assumption~\ref{assumption:1} the robust contraction property holds, i.e., inequality \eqref{eqn:robust_property} holds Therefore, we have that 
\begin{align*}
    E_{\gamma_{i|k}}(\bar{z}^f_{i|k}, z^f_{i|k}) \leq (1-\alpha)^{k+i} E_{\gamma_{0|0}}(\bar{z}^*_{0|0}, z_0) \\
    + \sum_{i=0}^{k+i-1} (1-\alpha)^i\nu \bar{\delta^2}.
\end{align*}
This inequality holds provided that the sequence $\bar{z}^*_{0|0},\dots,\bar{z}^*_{0|k},\dots,\bar{z}^*_{N|k}$ is a trajectory of the Koopman model \eqref{eqn:Koopman_approximation_true}, which is guaranteed by constraint \eqref{eqn:robust_MPC_c3} and the assumption $0\in\mathbb{W}$. Furthermore, it holds that:
 \begin{align*}
     d_{\gamma_{i|k}}(\bar{z}^f_{i|k}, z^f_{i|k}) \leq \sqrt{E_{\gamma_{i|k}}(\bar{z}^f_{i|k}, z^f_{i|k})},
 \end{align*}
We assumed that $z^*_{0|0}=z_0$, therefore $E_{\gamma_{0|0}}(\bar{z}^*_{0|0}, z_0) = 0$ and 
\begin{align*}
d_{\gamma_{i|k}}\left(\bar{z}^f_{i|k},z^f_{i|k}\right)
&\leq
\sqrt{\nu}\bar{\delta}
\sum_{j=0}^{k+i}(1-\alpha)^{j/2} \\
&\leq
\sqrt{\nu}\bar{\delta}
\sum_{j=0}^{\infty}(1-\alpha)^{j/2} \\
&=
\frac{\sqrt{\nu}\bar{\delta}}
{1-\sqrt{1-\alpha}}
=:\delta_{\mathrm{max}},
\end{align*}
where the second inequality follows from $(1-\alpha)^{j/2}\geq0$ for all $j\geq0$, and the equality follows from the geometric-series formula, provided that $0<\alpha\leq1$. Lastly, since $\mathbb{Z}_T\subseteq \bar{\mathbb{Z}}$ it follows that $z^f_{N|k}\in\mathbb{Z}$.
\end{proof}

\begin{remark}\label{eqn:initial_condition_k0}
To obtain the worst-case bound $\delta_{\mathrm{max}}$ for the Riemannian distance, we require $z^*_{0|0}=z_0$. This equality is enforced as an additional constraint for $k=0$. For $k>0$, the optimized initial condition is allowed to deviate from the measured state, which is necessary to preserve recursive feasibility.
\end{remark}

\subsection{Recursive Feasibility and Stability Analysis}
\begin{theorem}[Recursive feasibility]
\label{thrm:recursive_feasibility}
Suppose that Assumptions~\ref{assumption:1} and~\ref{assm:terminal_set} hold and that Problem~\ref{Prob:robust_MPC} is feasible at time instant \(k\). Then, Problem~\ref{Prob:robust_MPC} is recursively feasible. In particular, it is feasible at time instant \(k+1\).
\end{theorem}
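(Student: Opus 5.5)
The plan is the standard shifted-candidate construction of tube MPC, adapted to the free initial condition in Problem~\ref{Prob:robust_MPC}. Let $(z^*_{0|k},\bar{\mathbf{z}}^*_k,\bar{\mathbf{u}}^*_k)$ be an optimal (or any feasible) solution at time $k$. At time $k+1$ I will use the candidate initial condition $z_{0|k+1}:=z^*_{1|k}$ and the shifted input sequence $\bar{u}_{i|k+1}:=\bar{u}^*_{i+1|k}$ for $i=0,\dots,N-2$. The last input is the terminal feedback of Assumption~\ref{assm:terminal_set}, $\bar{u}_{N-1|k+1}:=r^u+\bar{K}(\bar{z}^*_{N|k}-r^z)$. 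The candidate state sequence is obtained by propagating the nominal LPV dynamics with scheduling $\rho_{i|k+1}=\bar{z}_{i|k+1}$. The prediction equality constraint then holds by construction.

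Since the model is exact in LPV form and the new initial condition equals the old first predicted state, the candidate states coincide with the old ones: $\bar{z}_{i|k+1}=\bar{z}^*_{i+1|k}$ for $i=0,\dots,N-1$. Hence the initial-condition constraint \eqref{eqn:robust_MPC_c3} is trivially satisfied, because $\|z_{0|k+1}-z^*_{1|k}\|_2=0\le\bar{w}$. This is exactly why that constraint refers to the previous prediction rather than the measured state.

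The tightened constraints for $i=0,\dots,N-2$ carry over directly from feasibility at time $k$, because $\bar{u}^*_{i+1|k}\in\bar{\mathbb{U}}$ and $\bar{z}^*_{i+1|k}\in\bar{\mathbb{Z}}$. For the appended step I use $\bar{z}^*_{N|k}\in\mathbb{Z}_T\subseteq\bar{\mathbb{Z}}$, which gives $\bar{z}_{N-1|k+1}\in\bar{\mathbb{Z}}$. The input condition $\bar{K}(\mathbb{Z}_T-r^z)\subseteq\bar{\mathbb{U}}-r^u$ gives $\bar{u}_{N-1|k+1}\in\bar{\mathbb{U}}$. Finally, with scheduling $\rho=\bar{z}^*_{N|k}\in\mathbb{Z}_T$, and using the equilibrium relation $r^z=A^*r^z+\bar{B}^*(r^z)r^u$, the new terminal state satisfies
\begin{align*}
\bar{z}_{N|k+1}-r^z=(A^*+\bar{B}^*(\rho)\bar{K})(\bar{z}^*_{N|k}-r^z).
\end{align*}
The invariance condition of Assumption~\ref{assm:terminal_set} then yields $\bar{z}_{N|k+1}\in\mathbb{Z}_T$, i.e.\ $h_T(\bar{z}_{N|k+1})\le\mathbf{0}$.

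The main obstacle is the affine term in the last step. Writing $\bar{z}_{N|k+1}-r^z=A^*(\bar z-r^z)+\bar{B}^*(\bar z)(\bar u - r^u)+(\bar{B}^*(\bar z)-\bar{B}^*(r^z))r^u$, the extra term $(\bar{B}^*(\bar z)-\bar{B}^*(r^z))r^u=\sum_i[r^u]_iB_i^*(\bar z-r^z)$ is linear in $\bar z-r^z$. It therefore has to be absorbed into the closed-loop matrix, i.e.\ the LPV matrix in Assumption~\ref{assm:terminal_set} must be read as $A^*+\sum_i[r^u]_iB^*_i+\bar{B}^*(\rho)\bar{K}$. I would make this interpretation explicit, or argue that for $r^u=0$ it reduces to the stated form. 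Once this is settled, feasibility at $k+1$ follows, and induction on $k$ gives recursive feasibility.
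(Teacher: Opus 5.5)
Your proposal follows the paper's proof: shift the time-$k$ solution, append the terminal input $r^u+\bar K(\bar z^*_{N|k}-r^z)$, and satisfy \eqref{eqn:robust_MPC_c3} with zero deviation via $z_{0|k+1}=z^*_{1|k}$. This candidate must be built from the solution actually implemented at time $k$, so drop the ``or any feasible''. Your observation that the deviation dynamics are $\bigl(A^*+\sum_i[r^u]_iB_i^*+\bar B^*(\rho)\bar K\bigr)(\bar z^*_{N|k}-r^z)$ is correct, so the identity displayed in your third paragraph needs this corrected matrix. The paper's proof glosses over this point, and its appended state $(A+B(\bar z^f_{N|k})K)\bar z^f_{N|k}$ even omits the reference. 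Stating this reading of Assumption~\ref{assm:terminal_set} explicitly is the right fix; it is needed whenever $r^u\neq 0$, as for the nonzero-angle references in Section~\ref{sec:example}.
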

\begin{proof}
    At time instant $k$ consider the nominal feasible trajectories:
    \begin{align*}
        \bar{\mathbf{u}}^f_k &= \col{(\bar{u}_{0|k}^f,\bar{u}_{1|k}^f,\dots,\bar{u}_{N-2|k}^f,\bar{u}_{N-1|k}^f)}, \\
        \bar{\mathbf{z}}^f_k &= \col{(\bar{z}_{0|k}^f,\bar{z}_{1|k}^f,\dots,\bar{z}_{N-1|k}^f,\bar{z}_{N|k}^f)}, \\
        \boldsymbol{\rho}^f_k &= \col{(\bar{z}_{0|k}^f,\bar{z}_{1|k}^f,\dots,\bar{z}_{N-2|k}^f,\bar{z}_{N-1|k}^f)}. 
    \end{align*}
    Next, at time instant $k+1$, consider the candidate trajectories:
    \begin{align*}
        \bar{\mathbf{u}}^f_{k+1} &= \col{(\bar{u}_{1|k}^f,\bar{u}_{2|k}^f,\dots,\bar{u}_{N-1|k}^f,r^u+K(\bar{z}_{N|k}^f-r^z))}, \\
        \bar{\mathbf{z}}^f_{k+1} &= \col{(\bar{z}_{1|k}^f,\bar{z}_{2|k}^f,\dots,\bar{z}_{N|k}^f,(A+B(\bar{z}_{N|k}^f)K)\bar{z}_{N|k}^f)}, \\
        \boldsymbol{\rho}^f_{k+1} &= \col{(\bar{z}_{1|k}^f,\bar{z}_{2|k}^f,\dots,\bar{z}_{N-1|k}^f,\bar{z}_{N|k}^f)}. 
    \end{align*}
   Under the standing assumptions and Assumption~\ref{assm:terminal_set}, the candidate trajectory is feasible. Indeed, the first $N-1$ elements of the shifted input and state sequences satisfy the tightened state and input constraints because they are feasible at time instant $k$; the appended terminal state and input are admissible and remain in $\mathbb{Z}_T\subseteq\bar{\mathbb{Z}}$ by the invariance and admissibility conditions of Assumption~\ref{assm:terminal_set}; and the initial-condition constraint \eqref{eqn:robust_MPC_c3} is satisfied at time instant $k+1$ by the candidate initial condition $\bar{z}^f_{0|k+1}=\bar{z}^f_{1|k}$, since then $\|z_{0|k+1}-z^*_{1|k}\|_2 = 0\leq\bar{w}$ whenever the feasible solution at time $k$ is taken as the optimal one. Hence, the candidate sequences constitute a feasible solution of Problem~\ref{Prob:robust_MPC} at time instant $k+1$, which completes the proof.
\end{proof} 

Next, let $\mathbb{Z}_f(N)$ denote the set of feasible initial states, i.e., $\bar{z}_{0|k}$, which are positive invariant by Theorem~\ref{thrm:recursive_feasibility} for the closed-loop system \eqref{eqn:system}--\eqref{eqn:feedback_contractive}. Next, define the optimal value function:
\begin{align*}
    V(\bar{z}^*_{0|k},z_k,r^u,r^z) := \ell_N(\bar{z}_{N|k}^*,r^z) + \sum_{i=0}^{N-1} \ell(\bar{z}^*_{i|k},r^z,\bar{u}^*_{i|k},r^u).
\end{align*}
Before stating the main closed-loop stability result, we recall the notion of ISS for discrete-time systems \cite{JIANG2001857}.
\begin{definition}[Input-to-state stability]\label{def:iss}
Consider a discrete-time system $\chi_{k+1} = F(\chi_k, e_k)$ with state $\chi_k\in\mathbb{R}^{n}$, input $e_k\in\mathbb{R}^{n_e}$, and let $\chi_e$ be an equilibrium of the zero-input dynamics, i.e., $\chi_e = F(\chi_e, 0)$. The system is called input-to-state stable (ISS) with respect to $e$ in a set $\mathbb{X}_0$ if there exist $\beta\in\mathcal{KL}$ and $\gamma\in\mathcal{K}_{\infty}$ such that, for all $\chi_0\in\mathbb{X}_0$ and all bounded input sequences $\{e_k\}_{k\in\mathbb{N}}$, the corresponding trajectories satisfy
$$
\|\chi_k - \chi_e\|_2 \leq \beta(\|\chi_0 - \chi_e\|_2, k) + \gamma(\|\mathbf{e}_{[0,k-1]}\|_{\infty}), \quad \forall k\in\mathbb{N},
$$
where $\mathbf{e}_{[0,k-1]}:=\col(e_0,\dots,e_{k-1})$.
\end{definition}

\begin{theorem}[Input-to-state stability]\label{thrm:practical_stability}
For the closed--loop system \eqref{eqn:system}--\eqref{eqn:feedback_contractive}, suppose that Assumption~\ref{assumption:1}, Assumption~\ref{assm:terminal_set} and the assumptions of Theorem~\ref{thrm:recursive_feasibility} hold for the reference signals $r^z := \bar{\phi}(r^x)$ and $r^u$, and let Problem~\ref{Prob:robust_MPC} be initialized with $z^*_{0|0}=z_0$ at $k=0$ (cf.\ Remark~\ref{eqn:initial_condition_k0}). Then, for all $\bar{z}_{0|k}^*\in\mathbb{Z}_f(N)$ it holds that:
\begin{enumerate}
    \item [(i)] the trajectory of initial conditions is asymptotically stable, i.e.,  
    $$
    \lim_{k\rightarrow \infty }\|z^*_{0|k} - r^z\| = 0,
    $$
    \item[(ii)] the closed-loop system \eqref{eqn:system}--\eqref{eqn:feedback_contractive} is ISS with respect to the disturbance mismatch $e_k := w_k^0 - w_k^s$, i.e., there exist $\tilde{\beta}\in\mathcal{KL}$ and $\tilde{\gamma}\in\mathcal{K}_{\infty}$ such that, for all $k\in\mathbb{N}$,
    $$
    \|x_k - r^x\|_2 \leq \tilde{\beta}(\|x_0 - r^x\|_2, k) + \tilde{\gamma}(\|\mathbf{e}_{[0,k-1]}\|_{\infty}),
    $$
    where $w_k^0, w_k^s\in\mathbb{W}$ denote the disturbance realizations associated with the optimized initial-condition trajectory and the measured lifted trajectory, respectively.
\end{enumerate}  
\end{theorem}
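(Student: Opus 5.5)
The argument has two layers. For (i) we use a standard Lyapunov argument on the optimal value function $V$, exploiting that the optimized initial condition at time $k+1$ is a free decision variable, coupled only through \eqref{eqn:robust_MPC_c3}. For (ii) we combine (i) with the incremental bound of Lemma~\ref{lemma:1}. The true state is then bounded by the sum of the nominal deviation $\|z^*_{0|k}-r^z\|$ and the contraction tube $\|z_k-z^*_{0|k}\|$. Finally we project to $x_k$, using that $x_k$ consists of the first $n_x$ components of $z_k=\bar\phi_{\theta^*}(x_k)$ and that $r^x$ consists of the first $n_x$ components of $r^z$.

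\textbf{Step 1 (value decrease).} Take the shifted candidate from the proof of Theorem~\ref{thrm:recursive_feasibility}, with initial condition $\bar z^f_{0|k+1}=\bar z^*_{1|k}$ and terminal input $r^u+\bar K(\bar z^*_{N|k}-r^z)$. Optimality then gives
\[
V_{k+1}\le V_k-\ell(\bar z^*_{0|k},r^z,\bar u^*_{0|k},r^u)+\Delta_N .
\]
Here $\Delta_N$ collects the terminal terms. The LPV terminal inequality of Assumption~\ref{assm:terminal_set}, evaluated at $\rho=\bar z^*_{N|k}\in\mathbb{Z}_T$, shows $\Delta_N\le 0$. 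We must check that the equilibrium condition $r^z=A^*r^z+\bar B^*(r^z)r^u$ is compatible with the scheduling $\rho=\bar z^*_{N|k}\neq r^z$. We would handle this by writing the error dynamics $\bar z^+-r^z=(A^*+\bar B^*(\rho)\bar K)(\bar z-r^z)+(\bar B^*(\rho)-\bar B^*(r^z))r^u$. The extra term is linear in $\bar z-r^z$ because of bilinearity, so we either absorb it into the assumption or interpret the assumption with $\bar B^*(\rho)$ replaced by the exact difference matrix. Standard quadratic bounds then give $\lambda_{\min}(Q_\text{eff})\|\bar z^*_{0|k}-r^z\|^2\le V_k\le c\|\bar z^*_{0|k}-r^z\|^2$ on $\mathbb{Z}_f(N)$. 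If $Q$ is only semidefinite, we use $R\succ0$ and the terminal cost instead. Consequently $V$ is a Lyapunov function for the sequence $z^*_{0|k}$, which is decoupled from the disturbance. Telescoping gives $\sum_k\ell(\cdot)<\infty$, hence $\bar z^*_{0|k}\to r^z$, and in fact exponential decay $\|\bar z^*_{0|k}-r^z\|\le c_1\lambda^k\|\bar z^*_{0|0}-r^z\|$. This proves (i).

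\textbf{Step 2 (tube bound).} By \eqref{eqn:robust_MPC_c3} and $0\in\mathbb{W}$, the sequence $\bar z^*_{0|k}$ is a trajectory of the Koopman model \eqref{eqn:Koopman_approximation_true} with some disturbance $w^0_k$. The measured lifted state is a trajectory with disturbance $w^s_k$, and both are driven by inputs related through the feedback \eqref{eqn:feedback_contractive}. Theorem~\ref{thrm:robust_contraction} therefore applies with $w_k-\bar w_k=e_k$. Since $z^*_{0|0}=z_0$, the proof of Lemma~\ref{lemma:1}, before $e_k$ is bounded by $\bar\delta$, gives
\[
\|z_k-\bar z^*_{0|k}\|_2\le\sqrt{\nu/\alpha_1}\sum_{i=0}^{k-1}(1-\alpha)^{i/2}\|e_{k-1-i}\|\le \tfrac{\sqrt{\nu/\alpha_1}}{1-\sqrt{1-\alpha}}\|\mathbf e_{[0,k-1]}\|_\infty .
\]
This defines a linear $\tilde\gamma\in\mathcal K_\infty$.

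\textbf{Step 3 (combine).} By the triangle inequality and the projection, $\|x_k-r^x\|\le\|z_k-r^z\|\le \|\bar z^*_{0|k}-r^z\|+\|z_k-\bar z^*_{0|k}\|$. Using $\bar z^*_{0|0}=z_0$, the first term is bounded by $c_1\lambda^k\|\bar\phi_{\theta^*}(x_0)-\bar\phi_{\theta^*}(r^x)\|\le c_1L_\phi\lambda^k\|x_0-r^x\|$, where $L_\phi$ is a Lipschitz constant of the neural lifting on the compact set $\mathbb X$. This yields $\tilde\beta(s,k)=c_1L_\phi\lambda^k s$.

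\textbf{Main obstacle.} We expect the main difficulty to be Step 1, namely obtaining a genuine decrease of $V$ with LPV terminal ingredients under state-dependent scheduling and the equilibrium offset. If exponential decay cannot be extracted because the quadratic upper bound on $V$ fails, we would fall back on the $\mathcal{KL}$ bound from the standard comparison lemma applied to $V$ with $\mathcal K_\infty$ sandwich bounds.
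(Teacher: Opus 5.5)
Your proposal is correct and follows the paper's proof essentially step by step. For (i), the shifted candidate $\bar z^f_{0|k+1}=\bar z^*_{1|k}$ and the LPV terminal inequality give the value decrease. For (ii), Lemma~\ref{lemma:1} is applied along the initial-condition trajectory (a Koopman trajectory thanks to \eqref{eqn:robust_MPC_c3}) with $e_k=w^0_k-w^s_k$ and $z^*_{0|0}=z_0$, followed by the triangle inequality, the projection $\|x_k-r^x\|_2\le\|z_k-r^z\|_2$ and the Lipschitz bound on $\bar\phi_{\theta^*}$.

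Your equilibrium-offset check is a genuine refinement that the paper glosses over. The terminal error dynamics are actually governed by $A^*+\sum_{i=1}^{n_u}[r^u]_iB^*_i+\bar B^*(\rho)\bar K$, so both the invariance and the Lyapunov condition of Assumption~\ref{assm:terminal_set} should be imposed on that matrix. By contrast, your fallback for $Q\succeq0$ via $R$ and $P$ would need an extra detectability-type condition. The paper has the same limitation, since its lower bound with $\lambda_{\min}(Q)$ tacitly requires $Q\succ0$.
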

\begin{proof}
    Since $z_{0|k}^*\in\mathbb{Z}_f(N)$, by Theorem~\ref{thrm:recursive_feasibility} we have that Problem~\ref{Prob:robust_MPC} remains feasible. Consider the feasible initial state $\bar{z}^f_{0|k+1} = \bar{z}^*_{1|k}$ and the feasible scheduling, state and input sequences:
    \begin{align*}
        &\boldsymbol{\rho}^f_{k+1} = \{z_{1|k}^*,\dots,z_{N-1|k}^*,z_{N|k}^*\}, \\
        &\bar{\mathbf{u}}^f_{k+1} = \{u_{1|k}^*,\dots,u_{N-1|k}^*,r^u + K(z_{N|k}^*-r^z)\}, \\
        &\bar{\mathbf{z}}^f_{k+1} = \{z_{2|k}^*,\dots,z_{N|k}^*,(A+\bar{B}(z_{N|k}^*)K)z_{N|k}^*\}.
    \end{align*}
    By optimality of the cost it holds that:
    \begin{align*}
       & V(\bar{z}^*_{0|k+1},r^u,r^z) - V(\bar{z}^*_{0|k},r^u,r^z) \\
       & \leq  V(\bar{z}^*_{1|k},r^u,r^z) - V(\bar{z}^*_{0|k},r^u,r^z) \\
       & = (z_{N|k}^*-r^z)^\top A_{\text{cl}}(z_{N|k}^*)^\top PA_{\text{cl}}(z_{N|k}^*)(z_{N|k}^*-r^z)  \\
       & - (z_{N|k}^*-r^z)^\top P (z_{N|k}^*-r^z) \\
       & + (z_{N|k}^*-r^z)^\top (Q + \bar{K}^\top R \bar{K}) (z_{N|k}^*-r^z) \\
       & - (z_{0|k}^*-r^z)^\top Q  (z_{0|k}^*-r^z) - (u_{0|k}^*-r^u)^\top R  (u_{0|k}^*-r^u) \\
       & \leq - (z_{0|k}^*-r^z)^\top Q  (z_{0|k}^*-r^z),
    \end{align*}
    where $A_{\text{cl}}(z_{N|k}^*) := A^*+\bar{B}^*(z_{N|k}^*)\bar{K}$. Then by exploiting standard MPC stability analysis techniques, suitable positive definite lower and upper bounds on $V(\bar{z}^*_{0|k},r^u,r^z)$ can be established, i.e., 
    $$
    \underline{\alpha}(\|\bar{z}_{0|k}^*-r^z\|) \leq V(\bar{z}^*_{0|k},r^u,r^z) \leq \bar{\alpha}(\|\bar{z}_{0|k}^*-r^z\|),
    $$ 
    where $\underline{\alpha}(s) = \lambda_{\text{min}}(Q)s$. For the computation of $\bar{\alpha}(s)$ we refer to, e.g., \cite{MAYNE2000789}. This further yields the desired property (i) via standard Lyapunov arguments. This proves that $\lim_{k\rightarrow \infty }\|z^*_{0|k} - r^z\| = 0$.

    To prove claim (ii) note that the trajectory consisting of the initial conditions, i.e., $\{\bar{z}_{0|k}^*, \bar{z}_{0|k+1}^*, \bar{z}_{0|k+2}^*, \dots \}$ is a valid trajectory of \eqref{eqn:Koopman_approximation_true} due to constraint \eqref{eqn:robust_MPC_c3}. Hence, there exist a disturbance sequence $w^0_k\in\mathbb{W}$, such that 
    $$
    \bar{z}^*_{0|k+1} = f_{K}(\bar{z}^*_{0|k}, u^*_{0|k},w_k^0).
    $$
    Similarly, there exists a disturbance sequence $w_k^s\in\mathbb{W}$ such that 
    $$
    z_{k+1} = f_{K}(z_k, u_k,w_k^s).
    $$
    By Lemma~\ref{lemma:1} we have that 
    \begin{align*}
        \|\bar{z}^*_{0|k+n} - z_{k+n}\|_2 \leq &\sqrt{\frac{\alpha_2}{\alpha_1}}(1-\alpha)^{n/2}\|\bar{z}^*_{0|k} - z_{k}\|_2 \\
       & \hspace{-20mm}+ \sqrt{\frac{\nu}{\alpha_1}}\sum_{i=0}^{n-1}(1-\alpha)^{i/2} \|w^0_{k+n-1-i}-w^s_{k+n-1-i}\|_2.
    \end{align*}
    Next, define 
     \begin{align*}
        e_k := w^0_{k}-w^s_{k},
    \end{align*}    
    and let $\mathbf{e}_{[k,k+n-1]}:= \col(e_k,e_{k+1},\dots,e_{k+n-1})$. Next, we upper bound by $\|\mathbf{e}_{[k,k+n-1]}\|_{\infty}\geq \|w^0_{k+n-1-i}-w^s_{k+n-1-i}\|_2$. In this case, the second term becomes a geometric series and we can further upper bound as 
    \begin{align}\label{eqm:iss_bound}
        \|\bar{z}^*_{0|k+n} - z_{k+n}\|_2 & \leq \beta(\|\bar{z}^*_{0|k} - z_k\|_2,n) + \gamma(\|\mathbf{e}_{[k,k+n-1]}\|_{\infty}),
    \end{align}
    where 
    \begin{align*}
       &\beta(\|\bar{z}^*_{0|k} - z_k\|_2,n) := \sqrt{\frac{\alpha_2}{\alpha_1}}(1-\alpha)^{n/2}\|\bar{z}^*_{0|k} - z_k\|_2 ,\\
       &\gamma(\|\mathbf{e}_{[k,k+n-1]}\|_{\infty}) := \sqrt{\frac{\nu}{\alpha_1}}\frac{1}{1-\sqrt{1-\alpha}} \|\mathbf{e}_{[k,k+n-1]}\|_{\infty}, 
    \end{align*}
    where $\beta\in\mathcal{KL}$ since $\alpha\in(0,1)$ and $\gamma\in\mathcal{K}_{\infty}$. Setting $k=0$ in \eqref{eqm:iss_bound} and using the initialization $z^*_{0|0}=z_0$ (cf.\ Remark~\ref{eqn:initial_condition_k0}), which implies $\beta(\|\bar{z}^*_{0|0}-z_0\|_2,n)=\beta(0,n)=0$, yields
    $$
    \|\bar{z}^*_{0|n} - z_{n}\|_2 \leq \gamma(\|\mathbf{e}_{[0,n-1]}\|_{\infty}), \quad \forall n\in\mathbb{N}.
    $$
    Furthermore, the Lyapunov inequalities established in the proof of claim (i) imply, by standard comparison arguments for discrete-time systems (see, e.g., \cite{JIANG2001857}), the existence of a function $\beta_z\in\mathcal{KL}$ such that
    $$
    \|\bar{z}^*_{0|k} - r^z\|_2 \leq \beta_z(\|\bar{z}^*_{0|0} - r^z\|_2,k), \quad \forall k\in\mathbb{N}.
    $$
    Combining the two bounds via the triangle inequality gives
    \begin{align*}
    \|z_k - r^z\|_2 &\leq \|z_k - \bar{z}^*_{0|k}\|_2 + \|\bar{z}^*_{0|k} - r^z\|_2 \\
    &\leq \beta_z(\|z_0 - r^z\|_2,k) + \gamma(\|\mathbf{e}_{[0,k-1]}\|_{\infty}),
    \end{align*}
    where we used $\bar{z}^*_{0|0}=z_0$. Since the original state is contained in the lifted state, it holds that $\|x_k - r^x\|_2 \leq \|z_k - r^z\|_2$. Moreover, since $\bar{\phi}_{\theta^*}$ is Lipschitz continuous on the compact set $\mathbb{X}$, there exists $L_{\phi}>0$ such that $\|z_0 - r^z\|_2 = \|\bar{\phi}_{\theta^*}(x_0)-\bar{\phi}_{\theta^*}(r^x)\|_2 \leq L_{\phi}\|x_0 - r^x\|_2$. Hence, the ISS estimate of claim (ii) holds with $\tilde{\beta}(s,k):=\beta_z(L_{\phi}s,k)\in\mathcal{KL}$ and $\tilde{\gamma}:=\gamma\in\mathcal{K}_{\infty}$, which proves claim (ii).
\end{proof}

\begin{remark}[Implications of ISS]\label{rem:iss_implications}
Theorem~\ref{thrm:practical_stability} establishes ISS of the closed-loop system with respect to the model mismatch $e_k$, which is a significantly stronger property than input-to-state \emph{practical} stability (ISpS) or practical asymptotic stability, where a nonzero offset can persist even for vanishing disturbances. In particular, since $\|e_k\|_2\leq\bar{\delta}$ for all $k\in\mathbb{N}$ and $\tilde{\gamma}$ is linear, ISS directly implies the asymptotic gain property \cite{JIANG2001857}
\begin{align*}
\limsup_{k\rightarrow\infty}\|x_k - r^x\|_2 &\leq \tilde{\gamma}\Big(\limsup_{k\rightarrow\infty}\|e_k\|_2\Big)\\ &\leq \sqrt{\frac{\nu}{\alpha_1}}\frac{\bar{\delta}}{1-\sqrt{1-\alpha}},
\end{align*}
i.e., an ultimate bound proportional to the asymptotic magnitude of the disturbance mismatch, as well as the convergence property: if $\lim_{k\rightarrow\infty}\|e_k\|_2=0$, then $\lim_{k\rightarrow\infty}\|x_k-r^x\|_2=0$. This should be contrasted with tube-based MPC schemes that certify convergence of the true state to a robust positively invariant set, see, e.g., \cite{mayne2005robust}. The same observation applies to contraction-based tube MPC: the schemes in \cite{ZhaoGridding,sasfi2023robust,YangCTubeDT2024} certify containment of the true trajectory in a tube around the nominal one, or convergence to a neighborhood of the target, although the underlying (robust) contraction certificates are inherently of incremental ISS type (cf.\ Remark~\ref{rem:delta_iss}). Theorem~\ref{thrm:practical_stability} makes the stronger closed-loop implication explicit, by combining the contraction-based tracking bound \eqref{eqm:iss_bound} with the Lyapunov decrease of the nominal value function into a single ISS estimate with respect to the reference. Similarly, robust and tube MPC schemes based on linear Koopman realizations typically establish ISpS-type or boundedness and set-convergence guarantees, see, e.g., \cite{zhang2022robust}, while the concurrent bilinear scheme \cite{higuchi2026bilinear} certifies convergence to a neighborhood of the target. Since linear Koopman models are recovered as the special case $B_i=\mathbf{0}$, $i=1,\dots,n_u$, of the bilinear model \eqref{eqn:Koopman_approximation_true}, Theorem~\ref{thrm:practical_stability} also provides, as a by-product, ISS guarantees for tube MPC based on linear Koopman realizations, thereby strengthening the closed-loop guarantees available in the literature.
\end{remark}

\begin{remark}[Relation to proportional error bounds]\label{rem:proportional}
An alternative route to exact asymptotic stability of a controlled equilibrium is to require \emph{proportional} error bounds, i.e., bounds that are linear in the norms of the (lifted) state and input and hence vanish at the origin. Such bounds can be certified for bilinear and kernel-based EDMD surrogates \cite{strasser2024safedmd,strasser2025kernel} and yield asymptotic stability of the origin for Koopman-based MPC with and without terminal conditions \cite{schimperna2026terminal,schimperna2025asymptotic}, as well as for Koopman-based robust feedback designs \cite{strasser2025feedback}. These guarantees are anchored at the origin, whereas Theorem~\ref{thrm:practical_stability} holds for arbitrary references $r^x$ under hard state and input constraints. The two routes are complementary within the proposed framework: if a proportional bound holds in a neighborhood of $(r^z,r^u)$, then the mismatch $e_k$ vanishes along converging closed-loop trajectories and the convergence property of ISS recovers asymptotic stability of the reference, subject to a suitable small-gain condition. Integrating proportional and kernel-based deterministic bounds within the developed RCCM tube framework is therefore a promising research direction.
\end{remark}

\section{Computational Framework for Robust MPC}
\label{sec:computation}
The online implementation of the proposed scheme requires solving Problem~\ref{Prob:robust_MPC}, computing the geodesic \eqref{eqn:geodesic_KOOPMAN} and evaluating the contractive feedback \eqref{eqn:feedback_contractive} at every time instant. This section develops a computational framework that reduces all of these tasks to (sequences of) quadratic programs, by exploiting the LPV structure of the bilinear Koopman model and a Chebyshev pseudospectral discretization of the geodesic.

Chebyshev polynomials can be used to accurately approximate functions $f:[0,1]\rightarrow \mathbb{R}$ and to compute integrals with high accuracy, or even analytically in the case of polynomial functions, see \cite{gong2009chebyshev,novelinkova2011comparison}. The $j^{\text{th}}$ Chebyshev basis function is defined by $T_j(s) = \cos(j \cos^{-1}(2s-1))$, or equivalently
\begin{align*}
T_0(s) &= 1\\
T_1(s) &= 2s-1\\
T_2(s) &= 8s^2 - 8s + 1\\
\vdots
\end{align*}
furthermore, the derivative of the basis functions are given by 
\begin{align*}
\frac{dT_0(s)}{ds} &= 0\\
\frac{dT_1(s)}{ds} &= 2\\
\frac{dT_2(s)}{ds} &= 16s - 8\\
\vdots
\end{align*}
These basis functions can be used to approximate a function $f$ and its derivative as
\begin{align*}
    f(s) \approx \sum_{i=0}^{N_{\gamma}} c_i T_i(s), \quad \frac{d f(s)}{ds}  \approx \sum_{i=0}^{N_{\gamma}} c_i \frac{d T_i(s)}{ds},
\end{align*}
where the coefficients have to be optimized. What makes these basis functions so interesting for computing geodesics, is that they can also approximate integrals very accurately and even analytically when ${N_{\gamma}}$ is chosen large enough and the functions is polynomial. An integral of a function $f:[0,1]\rightarrow \mathbb{R}$ can be approximated as
\begin{align}\label{eqn:integral}
    \int_{0}^{1} f(s) dx \approx \sum_{k=0}^{N_w} w_k f(s_k),
\end{align}
where  the points $s_k$ are the Chebyshev-Gauss-Lobatto nodes \cite{gong2009chebyshev, novelinkova2011comparison} defined by
\begin{align*}
    s_k = \frac{1}{2} \left( 1 + \cos\!\left(\frac{\pi (N_w-k)}{N_w}\right) \right), 
\quad k = 0,1,\dots,N_w,
\end{align*}
The coefficients $w_k$ are the quadrature weights \cite{gong2009chebyshev} are given by 
\begin{align*}
    w_k = w_{N_w-k} = \frac{4}{N_w} \sum_{j=0}^{\lfloor N_w/2 \rfloor'} \frac{1}{1-4j^2} \cos\left( \frac{2\pi j k}{N_w}\right), \\
    \quad s = 1,2,\dots,\lfloor N_w/2 \rfloor,
\end{align*}
where the $'$ after the sum means the first and last elements of the sum are multiplied by $0.5$ and the first and last weights are given by 
\begin{align}
    w_0 = w_{N_w} = \begin{cases}
        \frac{1}{N_w^2-1}, \quad &N_w \text{ is even}, \\
        \frac{1}{N_w^2}, \quad &N_w \text{ is not even}.
    \end{cases}
\end{align}
Next, we parameterize the  geodesic and its derivative as 
$$
\gamma(s) \approx   \underbrace{\begin{bmatrix}
    c_1^1 & \dots & c_1^{N_{\gamma}} \\ 
    \vdots & \vdots & \vdots \\
    c_{n_z}^1 & \dots & c_{n_z}^{N_{\gamma}}
\end{bmatrix}}_{\mathbf{C}}\underbrace{\begin{bmatrix}
    T_1(s) \\
    \vdots \\
    T_{N_{\gamma}}(s)
\end{bmatrix}}_{\mathbf{T}(s)},
$$
$$
\frac{d\gamma(s)}{ds} \approx \underbrace{\begin{bmatrix}
    c_1^1 & \dots & c_1^{N_{\gamma}} \\ 
    \vdots & \vdots & \vdots \\
    c_{n_z}^1 & \dots & c_{n_z}^{N_{\gamma}}
\end{bmatrix}}_{\mathbf{C}}\underbrace{\begin{bmatrix}
    \frac{dT_1(s)}{ds} \\
    \vdots \\
    \frac{dT_{N_{\gamma}}(s)}{ds}
\end{bmatrix}}_{\mathbf{T}_d(s)},
$$
where ${N_{\gamma}}\in\mathbb{N}$ is the number of basis functions that will be used for approximating the geodesic. Next, we define the optimization problem for computing the geodesic.
\begin{problem}[geodesic computation]\label{problem:geodesic}
    \begin{subequations}
    \begin{align}
            \mathbf{C}^* = \arg \min_{\mathbf{C}} & \sum_{\ell=0}^{N_w}\mathbf{T}_d(s_\ell)^\top \mathbf{C}^\top W_{\theta_w^*}^{-1}(\mathbf{C} \mathbf{T}(s_\ell)) \mathbf{C} \mathbf{T}_d(s_\ell) w_\ell \\ 
            &\quad \quad \text{subject to:} \nonumber \\
            &\mathbf{C}\mathbf{T}(0) = \bar{z}^*_{0|k} \\
            &\mathbf{C}\mathbf{T}(1) = \bar{\phi}_{\theta^*}(x_k)
    \end{align}        
    \end{subequations}
\end{problem}
Which is a nonlinear problem with linear equality constraints. The main difficulty in this problem arises from the inversion of the trained metric $W_{\theta_w^*}$. This issue is specific to the discrete-time formulation and stems from the structure of the metric learning problem. To address this computational challenge, we develop an iterative QP approach, presented next. Moreover, the contraction based control law can be approximated as
\begin{align*}
      u_k &=  \bar{u}^*_{0|k} + \sum_{k=1}^{N_w} L_{\theta_l^*}(\mathbf{C}^*\mathbf{T}(s_k)) W^{-1}_{\theta_w^*}(\mathbf{C}^*\mathbf{T}(s_k)) \mathbf{C}^*\mathbf{T}_d(s_k) w_k \\
      &\approx \bar{u}^*_{0|k} + \int_0^1 K(\gamma^*_{0|k}(s))\frac{\partial \gamma^*_{0|k}(s)}{\partial s} ds. \label{control_input}
\end{align*}
It is not straightforward to solve Problem~\ref{problem:geodesic}, since $ W_{\theta_w^*}$ is generally a nonlinear matrix function and the cost function requires the inverse of this matrix function which is hard to solve. Next, we reformulate the problem as a sequence of quadratic programs. To this end, we exploit a property of the Kronecker product to express the geodesic and its derivative as
\begin{subequations}
    \begin{align}
\mathbf{C}\mathbf{T}(s)
&= (I_n \otimes \mathbf{T}^{\top}(s))\operatorname{vec}(\mathbf{C}), \\
\mathbf{C}\mathbf{T}_d(s)
&= (I_n \otimes \mathbf{T}_d^{\top}(s))\operatorname{vec}(\mathbf{C}).
\end{align}
\end{subequations}
Let $\mathbf{C}_v := \operatorname{vec}(\mathbf{C})$. We solve the geodesic computation problem iteratively. At iteration $j\in\mathbb{N}$, let $\mathbf{C}_v^{(j-1)^*}$ denote the current estimate of $\mathbf{C}_v$. The geodesic computation problem can then be reformulated as a quadratic program around this estimate, as presented next.
\begin{problem}[geodesic computation QP form iteration ($j$)]\label{problem:geodesic_QP}
    \begin{align*}
&\mathbf{C}^{(j)^*}_v :=\arg \min_{\mathbf{C}^{(j)}_{v}}\sum_{\ell=0}^{N_w} (\mathbf{C}^{(j)}_v)^\top V(\mathbf{C}^{(j-1)^*},\ell) \mathbf{C}^{(j)}_{v} \\
&\text{subject to:}\\
& (I_n \otimes \mathbf{T}^\top(0)) \mathbf{C}^{(j)}_{v} = \bar{z}^*_{0|k}, \\
& (I_n \otimes \mathbf{T}^\top(1)) \mathbf{C}^{(j)}_{v} = \bar{\phi}_{\theta^*}(x_k), 
\end{align*}
\end{problem}
\noindent where 
\begin{align*}
&V\!\left(\mathbf{C}^{(j-1)^*},\ell\right)\\
&:= (I_n\otimes\mathbf{T}^{\top}(s_\ell))^{\top}
\left[W\!\left((I_n\otimes\mathbf{T}^{\top}(s_\ell))\mathbf{C}^{(j-1)^*}\right)\right]^{-1}
\nonumber\\
&\qquad \cdot (I_n\otimes\mathbf{T}^{\top}(s_\ell))w_\ell .
\end{align*}
Above, $V\!\left(\mathbf{C}^{(j-1)^*},\ell\right)$ is a positive semidefinite matrix and the problem is therefore a QP. The complete online procedure, which combines the LPV-based MPC iterations with the geodesic QP iterations and implements the control diagram depicted in Figure~\ref{fig:control_architecture}, is summarized in Algorithm~\ref{alg:robust_MPC}. The next section evaluates the complete framework through a numerical example.

\begin{algorithm}[t]
\caption{\emph{Online} contraction-based robust MPC at time instant $k$}
\label{alg:robust_MPC}
\begin{algorithmic}[1]
\Require Trained Koopman model $f^*_K$, lifting map $\bar{\phi}_{\theta^*}$, metric $W_{\theta^*_w}$, control law $L_{\theta^*_l}$, MPC iteration number $N_{\mathrm{mpc}}$, and geodesic iteration number $N_{\mathrm{geo}}$, number of basis functions $N_{\gamma}$, number Chebyshev-Gauss-Lobatto nodes $N_w$, forgetting factors $\alpha_{\rho}\in[0,1]$ and $\alpha_{\gamma}\in[0,1]$.
\State Measure the current state $x_k$ from system~\eqref{eqn:system}.
\State Compute the lifted state $z_k=\bar{\phi}_{\theta^*}(x_k)$.
\If{$k=0$}
    \State Add the constraint $z_{0|0}=z_0$ to Problem~\ref{Prob:robust_MPC}.
    \State Initialize $\boldsymbol{\rho}_0 = \col{(z_0, \dots,z_0)}$.
\EndIf
\If{$k>0$}
\State Initialize the predicted state sequence $\boldsymbol{\rho}_k=\col{(z_{1|k-1}^{*}, \dots,z_{N|k-1}^{*})}$.
\EndIf
\For{$j=1,\ldots,N_{\mathrm{mpc}}$}
    \State Solve Problem~\ref{Prob:robust_MPC} using $\boldsymbol{\rho}_k$.
    \State Obtain the nominal input $\bar{u}_{0|k}^{*}$, initial state $z_{0|k}^{*}$, 
    \State Update $\boldsymbol{\rho}_k = \alpha_{\rho}\col{(z_{0|k}^{*}, \dots,z_{N-1|k}^{*})} + (1-\alpha_{\rho})\boldsymbol{\rho}_k$.
\EndFor
\State Initialize $\mathbf{C}^{(0)}_v$ to form a straight-line path between $z_{0|k}^{*}$ and $z_k$.
\For{$j=1,\ldots,N_{\mathrm{geo}}$}
    \State Solve Problem~\ref{problem:geodesic_QP} using $\mathbf{C}^{(j-1)*}_v$.
    \State Set $\mathbf{C}^{(j)*}_v = \alpha_{c}\mathbf{C}^{(j)*}_v + (1-\alpha_{c})\mathbf{C}^{(j-1)*}_v$.
\EndFor
\State Compute the contractive feedback correction:
\begin{align*}
\hat{\gamma}^*_{0|k}(s) &:= (I_n \otimes \mathbf{T}^{\top}(s)) \mathbf{C}^{(N_{\mathrm{geo}})*}_v, \\
\frac{\partial\hat{\gamma}^*_{0|k}}{\partial s}(s) &:= (I_n \otimes \mathbf{T}_d^{\top}(s)) \mathbf{C}^{(N_{\mathrm{geo}})*}_v, \\
    \Delta u_k &=
    \sum_{\ell=1}^{N_w} L_{\theta_l^*}(\hat{\gamma}^*_{0|k}(s_\ell)) W^{-1}_{\theta_w^*}(\hat{\gamma}^*_{0|k}(s_\ell)) \frac{\partial\hat{\gamma}^*_{0|k}}{\partial s}(s_{\ell}) w_{\ell}.
\end{align*}
\State Apply $u_k=\bar{u}_{0|k}^{*}+\Delta u_k$ to system~\eqref{eqn:system}.
\end{algorithmic}
\end{algorithm}

\section{Numerical Example}
\label{sec:example}
We evaluate the effectiveness of the contraction-based tube MPC framework on a nonlinear pendulum system with a state-dependent input channel. The dynamics are given by the control-affine nonlinear system
\begin{subequations}\label{eqn:pendulum}
    \begin{align}
        \dot{x}_1 &= x_2, \\
        \dot{x}_2 &= -\frac{g}{l}\sin(x_1) - \frac{d}{m l^2}x_2
        + \frac{\cos(x_1)}{m l^2}u .
    \end{align}
\end{subequations}
The system parameters are selected as $m=1$, $l=1$, $g=9.81$, and $d=0.3$. The continuous-time dynamics are discretized using a fourth-order Runge--Kutta integration scheme with sampling time $T_s=1/30$. The input term represents a nonlinear state-dependent forcing term, which can be interpreted as a horizontal force applied to the pendulum rather than a direct torque input. This choice introduces a nonlinear input coupling that increases the difficulty of the system identification problem and provides a suitable benchmark for evaluating the proposed bilinear Koopman model. 

\begin{figure}[t]
\centering
\includegraphics{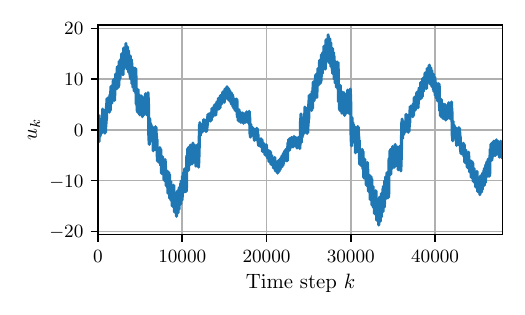}
\caption{Input signal used for model identification.}
\label{fig:input_id}
\end{figure}

\begin{figure}[t]
\centering
\includegraphics{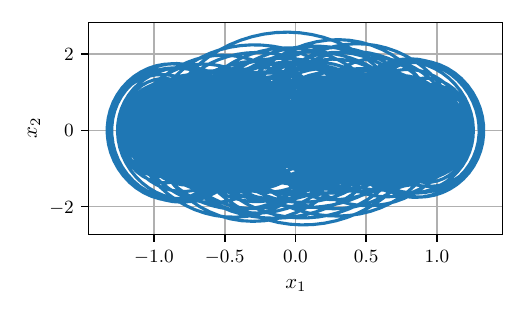}
\caption{State data used for model identification.}
\label{fig:state_id}
\end{figure}

\paragraph{Bilinear Koopman Model Learning}
To identify a model for \eqref{eqn:pendulum}, we perform an identification experiment by exciting the system with a piecewise-constant input signal augmented by a random dithering signal to ensure sufficient excitation and exploration of the state-space. We generate in total $N_s=50000$ data samples. The input and state data that is used for training is shown in Figure~\ref{fig:input_id} and Figure~\ref{fig:state_id}. 
The input signal is chosen in such a way that the state-space is covered as much as possible.

Next, using the collected input-state data, we solve Problem~\ref{prob:model_learning} with $N=30$, which is equal to the prediction horizon that is used later in the MPC problem. While it is not necessary to have these equal, it promotes the minimization of the multi-step prediction error. We used $40000$ epochs for training the models. The training and validation loss over the epochs are displayed in Figure~\ref{fig:loss_model_learning}. The maximum modeling error \eqref{eqn:approximation_error} over the training dataset was equal to $\max_k{\|w_k\|_2} < 0.01$. We therefore assume that the disturbance set is given by $\mathbb{W} := \{w\in\mathbb{R}^{n_z} \vert \|w\|_2 \leq 0.01\}$.

\begin{figure}[t]
\centering
\includegraphics{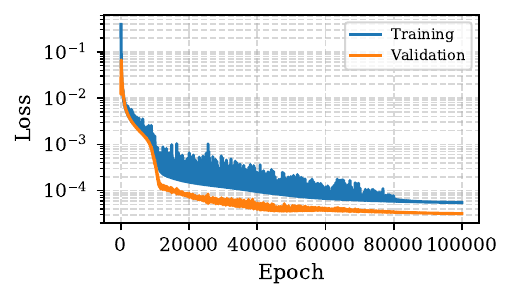}
\caption{Training and validation loss for over epochs for solving Problem~\ref{prob:model_learning}.}
\label{fig:loss_model_learning}
\end{figure}

\begin{figure}[t]
\centering
\includegraphics[
    width=0.9\linewidth,
    trim={0.2cm 0.2cm 0.2cm 1.0cm}, 
    clip
]{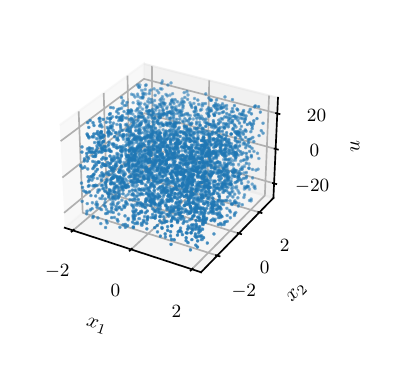}
\caption{State $\hat{x}_i$ and input $\hat{u}_i$ data points.}
\label{fig:metric_data}
\end{figure}

\paragraph{Contraction Metric and Controller Learning}
    To train the contraction metric we generate a data set of input, state and disturbance data points $\mathcal{D}_M = \{\hat{x}_k,\hat{u}_k,\hat{w}_k\}_{k=0}^{N_M}$ with $N_M=16,875$. The states and inputs are sampled from the intervals $-2\leq x_1\leq 2$, $-3\leq x_2\leq 3$, $-25\leq u\leq 25$ and $-0.01\leq w \leq 0.01$, Figure~\ref{fig:metric_data} shows the resulting state and input samples. Next, we apply Algorithm~\ref{alg:sparse_sampling} with $N_{\epsilon}=5$ and $\delta_j^{\epsilon} \in \{0, 0.001, 0.0025, 0.005, 0.01\}$, for $j=1,\ldots,5$, and solve Problem~\ref{prob:metric_learning} with the hyperparameters shown in Table~\ref{tab:hyperparameters1}. The metric  $W_{\theta_w}\in\mathcal{NN}$ is parameterized as a feedforward neural network with $2$ hidden layers and $128$ neurons per layer and the controller $L_{\theta_\ell}\in\mathcal{NN}$ is parameterized as a  feedforward neural network with $2$ hidden layers and $64$ neurons per layer. In total we perform $10000$ epochs. For the trained metric and controller to satisfy the contraction inequality, the matrix $M_{i,j}$ in \eqref{eqn:contraction_conbstraint} must be positive definite over the dataset. The percentages of positive eigenvalues for the training and validation datasets are $90.4\%$ and $90.2\%$, respectively. Although these values are reasonably high, they remain below the desired level, as the contraction inequality is enforced over the entire constraint set. Future work will investigate more informed sampling strategies, such as sampling along system trajectories, to improve the coverage of relevant regions of the state space, thereby increasing the percentage of positive eigenvalues and potentially yielding tighter bounds. This means that the trained metric satisfies the contraction inequality \eqref{eqn:contraction_inequality_koopman} almost everywhere. The minimum and maximum eigenvalues of $W_{\theta^*_w}$ over the entire dataset were equal to $\lambda_{\text{max}}= 9.3458$ and $\lambda_{\text{min}} = 2.5510$, leading to a condition number of approximately $3.6$.

\begin{table}[t]
\centering
\caption{Hyperparameters for Problem~\ref{prob:metric_learning}.}
\label{tab:hyperparameters1}

\begin{tabular}{cccccc}
\toprule
\textbf{Hyperparameter} & $\alpha$ & $\nu$ & $\gamma_c$ & $\beta$ & $\varepsilon$ \\
\midrule
\textbf{Value} & 0.1 & 0.5 & 1 & 5 & 10 \\
\bottomrule
\end{tabular}

\end{table}

\paragraph{Constraint tightening computations}
The state and input constraint functions ($h^u$ and $h^z$) in Problem~\ref{Prob:robust_MPC} are defined as:
\begin{align*}
   & h^u(u) = \begin{bmatrix}
        \frac{1}{25} \\
        -\frac{1}{25} 
    \end{bmatrix} u - \begin{bmatrix}
        1 \\1
    \end{bmatrix}, \\ 
    &h^z(z) = \begin{bmatrix}
        \frac{1}{2} & 0 & 0 & \dots & 0\\
        -\frac{1}{2}  & 0 & 0 & \dots & 0 \\ 
       0 &  \frac{1}{3}   & 0 & \dots & 0 \\
        0 &  -\frac{1}{3}  & 0 & \dots & 0 \\ 
    \end{bmatrix} z - \begin{bmatrix}
        1 \\1 \\1 \\1
    \end{bmatrix}. 
\end{align*}
The tightening parameter $\delta_{\text{max}} = 0.2756$ is computed using the parameters from Table~\ref{tab:hyperparameters1} and using $\bar{\delta} = 2\bar{w} = 0.02$. To compute the $c^u$ and $c^z$ we evaluate the formulas \eqref{eqn:constraint_tightening_constant} in a similar way as the dataset for training the metric. Hence, we generate a large set of points as in Figure~\ref{fig:metric_data} and map this grid through the trained observable function $\bar{\phi}_{\theta^*}$. This gave $c^z=\col(1.31,1.31,1.06,1.06)$ and $c^u=\col(0.77,0.77)$. To obtain a terminal set $\mathbb{Z}_T$ and terminal cost $P$ satisfying Assumption~\ref{assm:terminal_set}, we use MPT3 to compute terminal sets for a grid of lifted reference states $\rho_i = \bar{\phi}_{\theta^*}(r_i^x)$, $r_i^x \in \left\{\operatorname{col}(1,0),\operatorname{col}(0.5,0),\operatorname{col}(0,0),\operatorname{col}(-0.5,0)\right\}$. Due to the relatively high dimensionality of the lifted state space, the resulting terminal sets contained more than $400$ constraints. This substantially increased the computational burden of the MPC controller, rendering the simulations computationally impractical. Therefore, we choose $\mathbb{Z}_T=\bar{\mathbb{Z}}$. The computation of less conservative terminal sets and costs for high-dimensional systems is left for future work.

\paragraph{Robust MPC Simulation Results}
Finally, we implement the robust MPC controller using Algorithm~\ref{alg:robust_MPC} with
$N_{\mathrm{geo}}=2$, $N_{\mathrm{mpc}}=2$, $N_w=20$, $N_\gamma=10$,
$\alpha_{\rho}=0.01$, and $\alpha_{\gamma}=0.01$. The geodesic and contractive feedback computations (lines $10$--$14$ of Algorithm~\ref{alg:robust_MPC}) averaged $0.079\mathrm{s}$, while the MPC solve (lines $15$--$21$) averaged $0.023\mathrm{s}$. All methods were implemented in CasADi, with the model predictive control (MPC) problem solved using MOSEK and the geodesic computed using OSQP, on a 13th Gen Intel\textsuperscript{\textregistered} Core\textsuperscript{\texttrademark} i7-1370P CPU at 1.90~GHz. The resulting closed-loop trajectories of $z_k$ and $u_k$ are shown in Figure~\ref{fig:closed_loop}, together with the corresponding MPC trajectories $\bar{z}_{0|k}^*$ and $\bar{u}_{0|k}^*$. As discussed in Remark~\ref{eqn:initial_condition_penalty}, we augment Problem~\ref{Prob:robust_MPC} with the regularization term
\[
\ell_0(\bar{z}_{0|k},z_k)
=
\gamma_0\left\|\bar{z}_{0|k}-z_k\right\|_2^2,
\qquad \gamma_0=1000,
\]
to encourage the MPC trajectory to remain close to the measured system trajectory. The shaded blue regions indicate the tubes used for constraint tightening, defined as
\begin{align*}
    T_{x_1}(k)
    &=
    \left\{
    x\in\mathbb{R}
    \,\middle|\,
    x_1(k)-c_2^z \leq x \leq x_1(k)+c_1^z
    \right\}, \\
    T_{x_2}(k)
    &=
    \left\{
    x\in\mathbb{R}
    \,\middle|\,
    x_2(k)-c_4^z \leq x \leq x_2(k)+c_3^z
    \right\}, \\
    T_u(k)
    &=
    \left\{
    u\in\mathbb{R}
    \,\middle|\,
    u_k-c_2^u \leq u \leq u_k+c_1^u
    \right\}.
\end{align*}

\begin{figure}[t]
\centering
\includegraphics[
    width=\linewidth,
    trim={0.2cm 0.2cm 0.2cm 0.2cm}, 
    clip
]{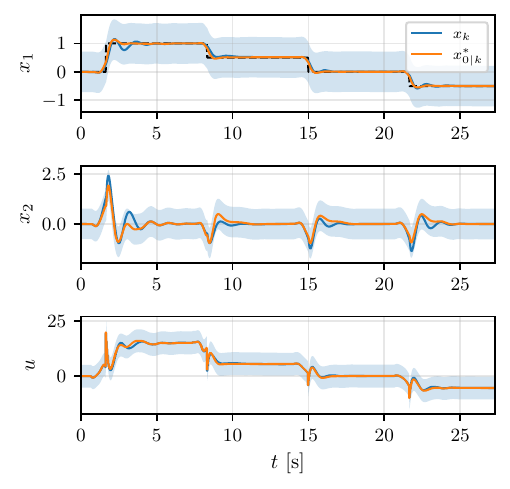}
\caption{Closed-loop state trajectory $x_k$ and optimized initial-state trajectory $\bar{x}^*_{0|k}$, together with the corresponding tube. The input plot shows the optimal MPC input $u^*_{0|k}$ and the applied control input $u_k$.}
\label{fig:closed_loop}
\end{figure}

Figure~\ref{fig:closed_loop} shows that the reference is tracked accurately, with small visible steady-state offset. The results also indicate that the computed tube is relatively conservative, particularly for $x_1$. In contrast, the tube for $x_2$ is comparatively tight, as the closed-loop trajectory approaches its boundary during the rapid reference changes. The conservatism of the tubes is primarily attributed to the data set used for metric learning, shown in Figure~\ref{fig:metric_data}. In the current implementation, the metric is trained using samples from the entire constraint sets $\mathbb{X}$ and $\mathbb{U}$, which can result in overly conservative bounds in regions that are not frequently visited by the closed-loop system.

Future work will therefore investigate more informed sampling strategies, such as sampling along closed-loop trajectories generated by the trained Koopman model. This is enabled by the contraction-based feedback law~\eqref{eqn:feedback_contractive}, which guarantees that the closed-loop system trajectory remains sufficiently close to the nominal MPC trajectory, as established by~\eqref{eqn:2norm_bound}. Consequently, sampling along nominal closed-loop trajectories can focus the metric learning data on dynamically relevant regions of the state-input space while still accounting for the deviations induced by the system dynamics. This is expected to reduce the conservatism of the learned metric and, consequently, yield tighter tubes.

\section{Conclusion}
\label{sec:conclusion}
This paper presented a robust tube MPC framework for nonlinear systems represented by data-driven bilinear Koopman models. Discrete-time robust control contraction metrics were derived for bilinear Koopman models, yielding contraction certificates that explicitly account for the mismatch between the Koopman model and the true dynamics via a disturbance gain. To synthesize such certificates in the high-dimensional lifted state space, where SOS-based methods are intractable, a sparse sampling strategy combined with neural network parameterizations of the metric and controller was proposed. Based on the resulting certificates, a robust MPC scheme with tightened constraints and LPV-based terminal ingredients was formulated, and recursive feasibility, robust constraint satisfaction and input-to-state stability (ISS) of the closed-loop system with respect to the model mismatch were established. By exploiting the LPV structure of the bilinear Koopman model and a Chebyshev pseudospectral discretization of the geodesic, the online computations reduce to a convex MPC problem and a sequence of quadratic programs. The complete framework was validated on a nonlinear pendulum benchmark with a state-dependent input gain.

Future work will focus on informed sampling strategies along closed-loop trajectories to reduce the conservatism of the learned certificates, on the computation of less conservative terminal sets and costs for high-dimensional lifted models, and on probabilistic validation bounds for both the estimated disturbance set and the learned contraction certificates. Moreover, integrating proportional and kernel-based error bounds \cite{schimperna2025asymptotic,strasser2024safedmd,strasser2025kernel} within the developed RCCM tube framework is a promising route to strengthen the established ISS guarantee to asymptotic stability of the reference.

\end{document}

%% file: figures/metric_learning_sparse.tex
\begin{tikzpicture}[
    >=Latex,
    every node/.style={inner sep=0pt}
]

\node (left)
    {\includegraphics[width=0.35\columnwidth]{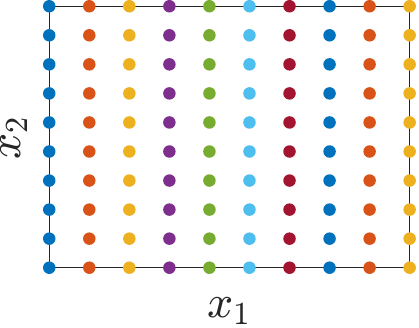}};

\node[right=1.95cm of left] (right)
    {\includegraphics[width=0.4\columnwidth]{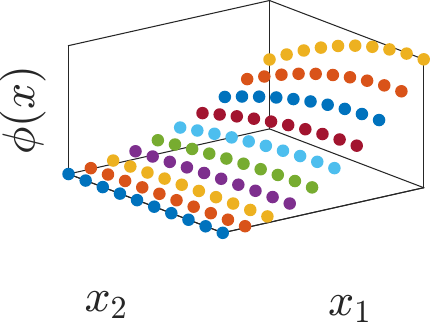}};

\draw[-{Latex[length=1.8mm,width=1mm]}, thick]
    ($(left.east)+(2.5mm,0)$) --
    ($(right.west)+(-2.5mm,0)$);

\node[above=2mm of $(left.east)!0.5!(right.west)$]
    {$\bar{\phi}(x)$};

\end{tikzpicture}

%% file: figures/diagram.tex
\begin{tikzpicture}[
    >=stealth,
    node distance=1.5cm,
    block/.style={
        draw,
        thick,
        minimum height=1.2cm,
        align=center,
        font=\large
    },
    line/.style={thick,->}
]

\node[block, minimum width=2.0cm] (lift) {$\bar{\phi}_{\theta^*}(\cdot)$};

\node[block, minimum width=4.0cm, right=2.0cm of lift] (mpc)
{$\begin{array}{c}
\text{Robust MPC}\\
\text{Problem~\ref{Prob:robust_MPC}}
\end{array}$};

\node[block, minimum width=4.0cm, right=1.8cm of mpc] (geo)
{$\begin{array}{c}
\text{Geodesic}\\
\text{Equation \eqref{eqn:geodesic_KOOPMAN}}
\end{array}$};

\node[block, minimum width=3.5cm, below=2.0cm of geo] (feedback)
{$
u_k=\bar{u}^*_{0|k}+
\displaystyle\int_0^1
K( \gamma_{0|k}^*(s))
\frac{\partial\gamma_{0|k}^*(s)}{\partial s}\,ds
$};

\node[block, minimum width=3.2cm, left=2.4cm of feedback] (plant)
{$x_{k+1}=f(x_k,u_k)$};

\draw[line] (lift) -- node[above] {$z_k$} (mpc);
\draw[line] (mpc) -- node[above] {$\bar{z}^*_{0|k}$} (geo);

\draw[line] (mpc.south) -- ++(0,-1.2)
node[pos=0.45,right] {$\bar{u}^*_{0|k}$}
-| (feedback.north);

\draw[line] (geo.east) -- ++(1.7,0)
-- ++(0,-3.27)
-- (feedback.east)
node[pos=0.25,right,xshift=5pt,yshift=55pt] {$ \gamma_{0|k}^*(s)$};

\draw[line] ([xshift=5pt]lift.east) -- ++(0,1.3)
-- (geo.north |- {$(lift.east)+(0,1.3)$})
node[pos=0.35,above] {$z_k$}
-- (geo.north);

\draw[line] (feedback.west) -- node[above] {$u_k$} (plant.east);

\draw[line] (plant.west) -- ++(-1.8,0)
-- (lift.south)
node[pos=0.5,above,xshift=10pt,yshift=3pt] {$x_k$};

\end{tikzpicture}